\newcommand{\abs}[1]{\lvert#1\rvert}
\newcommand\precdot{\mathrel{\ooalign{$<$\cr
\hidewidth\hbox{$\cdot\mkern1mu$}\cr}}}
\newcommand{\Cone}{\operatorname{Cone}}
\newcommand{\Conv}{\operatorname{Conv}}
\newcommand{\join}{\operatorname{Join}}
\newcommand{\Edges}{\operatorname{Edges}}
\newcommand{\DDef}{\operatorname{Def}}
\newcommand{\Vol}{\operatorname{Vol}}
\newcommand{\Cl}{\operatorname{Cl}}
\newcommand{\Pic}{\operatorname{Pic}}
\newcommand{\rank}{\operatorname{rank}}
\newcommand{\bC}{\ensuremath{\mathbb{C}}}
\newcommand{\bO}{\ensuremath{\mathbb{O}}}
\newcommand{\bP}{\ensuremath{\mathbb{P}}}
\newcommand{\bQ}{\ensuremath{\mathbb{Q}}}
\newcommand{\bR}{\ensuremath{\mathbb{R}}}
\newcommand{\bZ}{\ensuremath{\mathbb{Z}}}
\newcommand{\scD}{\ensuremath{\mathcal{D}}}
\newcommand{\scL}{\ensuremath{\mathcal{L}}}
\newcommand{\scO}{\ensuremath{\mathcal{O}}}
\newcommand{\scP}{\ensuremath{\mathcal{P}}}
\theoremstyle{plain}
\newtheorem{theorem}{Theorem}[section]
\newtheorem{proposition}[theorem]{Proposition}
\theoremstyle{definition}
\newtheorem{definition}[theorem]{Definition}
\newtheorem{remark}[theorem]{Remark}
\newtheorem{example}[theorem]{Example}
\theoremstyle{definition}
\title{Complete intersection Calabi--Yau threefolds in Hibi toric varieties\\ 
and their smoothing}
\author{Makoto Miura}
\date{}
\begin{document}
%[ title and abstract
\maketitle
%[ abstract
\begin{abstract}
 In this article,
 we summarize combinatorial description of
 complete intersection Calabi--Yau threefolds 
 in Hibi toric varieties.
 Such Calabi--Yau threefolds 
 have at worst conifold singularities, and
 are often smoothable to 
 non-singular Calabi--Yau threefolds.
 We focus on 
 such non-singular Calabi--Yau threefolds of Picard number one,
 and illustrate the calculation of topological invariants,
 using new motivating examples.
\end{abstract}

%]

%]
%[ Introduction
\section{Introduction}
%[ Purpose
%The purpose of this article is to 
%provide a brief summary on
%combinatorial description of
%general complete intersection Calabi--Yau threefolds
%in Hibi toric varieties and their smoothing.
%
A {\it Hibi toric variety} is defined as a projective toric variety $\bP_{\Delta(P)}$
associated with an order polytope
\begin{align}
 \Delta(P) =\left\{ (x_u)_{u\in P} \mid 0 \le x_u \le x_v \le 1
 \text{ for }u \prec v \in P \right\} \subset \bR^P,
\end{align}
for a finite poset $P=(P,\prec)$.
For example, all products of projective spaces are Hibi toric varieties; hence
at least 2590 topologically distinct non-singular Calabi--Yau threefolds are
obtained as complete intersections \cite{MR1086756}.
In general, complete intersection Calabi--Yau threefolds in Hibi toric varieties
have finite number of nodes,
and are often smoothable to non-singular Calabi--Yau threefolds by flat deformations.
Complete intersections in Grassmannians 
(or more generally in minuscule Schubert varieties)
give basic examples of such smoothing
\cite{MR1619529, MR3688804}.

The purpose of this article is to
provide a brief summary on 
combinatorial descriptions of 
complete intersection Calabi--Yau threefolds
in Hibi toric varieties and their smoothing.
Based on \cite{MR2801412},
we describe the smoothability in terms of posets
(Proposition~\ref{prop:smoothing}), and survey the calculation of 
topological invariants for resulting non-singular simply-connected
Calabi--Yau threefolds (Subsection~\ref{subsec:top}),
by focusing on the case of Picard number one for simplicity.
In addition to the summary,
we show the simply-connectedness
as a corollary of the result on small resolutions for Hibi toric varieties
(Proposition~\ref{prop:small}).
To illustrate the calculation, 
we introduce several new examples of
such non-singular Calabi--Yau threefolds
of Picard number one (Subsection~\ref{ss:new}, Table~\ref{tab:CY3}).

%]
%[ Calabi--Yau
A Calabi--Yau threefold is a complex projective threefold $X$
with at worst canonical singularities satisfying  
$\omega_X \simeq \scO_X$ and $H^1(X, \scO_X)=0$.
There are a huge number of such threefolds, 
even non-singular.
Mirror symmetry is a conjectural duality
between a non-singular Calabi--Yau threefold $X$ and
another non-singular Calabi--Yau threefold $X^*$, 
called a {\it mirror manifold\/} for $X$.
Various non-trivial relations between $X$ and $X^*$
are expected.
For example, Hodge numbers satisfy
\begin{align}
 \label{eq:hodge}
 h^{i,j}(X) = h^{3-j, i}(X^*) \quad \text{for all $i$ and $j$}.
\end{align}

One of the big mysteries of mirror symmetry
is whether
every non-singular Calabi--Yau threefold $X$ has a mirror manifold $X^*$ or not.
Note an obvious exception in the case with $h^{2,1}(X)=0$, and that
the mirror manifold $X^*$ is not unique in general,
even as topological manifolds.
There is an excellent class of non-singular Calabi--Yau threefolds such that
the above question has an affirmative answer;
for crepant resolutions of complete intersection Calabi--Yau threefolds
in Gorenstein toric Fano varieties,
we have mirror manifolds in the same class, called the {\it
Batyrev--Borisov mirrors\/} \cite{MR1269718, 1993alg.geom.10001B}.
In order to expand this class, 
the conjectural mirror construction via conifold transitions seems to be a 
promising direction.

%]
%[ conifold transition
Let $X_0$ be a Calabi--Yau threefold
with finitely many nodes.
Suppose that 
$X_0$ admits a smoothing $X \leadsto X_0$ by a flat deformation,
and a small resolution $Y \rightarrow X_0$.
The composite operation connecting two non-singular Calabi--Yau threefolds
$X$ and $Y$ is called a \textit{conifold transition}:
\begin{align}
 \label{eq:coni}
 X \leadsto X_0 \leftarrow Y.
\end{align}
There is a natural closed immersion of the Kuranishi space $\DDef(Y)$ to $\DDef(X_0)$
\cite[Proposition 2.3]{MR1923221}, and hence, it makes sense to put them together 
into some giant moduli space.
There is a question, commonly referred to as (a version of) {\it Reid's fantasy\/}, 
which asks whether all simply-connected non-singular Calabi--Yau threefolds 
fit together into a single irreducible family via conifold transitions
\cite{MR909231}.
Suppose that
$X$ and $Y$ have torsion-free homology
for a conifold transition (\ref{eq:coni}).
{\it Morrison's conjecture\/} in \cite{MR1673108} says that the mirror manifolds
are also connected via a conifold transition of the opposite direction: 
\begin{align}
 \label{eq:conimirror}
 Y^* \leadsto Y^*_0 \leftarrow X^*.
\end{align}
Together with the spirit of Reid's fantasy, 
one may expect a mirror construction
for a large number of non-singular Calabi--Yau threefolds
from the Batyrev--Borisov mirror pairs.

We still do not know the existence of a mirror manifold $X^*$,
even for the smoothing $X\leadsto X_0$ of 
a complete intersection Calabi--Yau threefold $X_0$ in a Hibi toric variety.
Nevertheless, we can discuss the mirror symmetry 
by calculating periods and Picard--Fuchs operators for the conjectural mirror family,
as we see in Remark~\ref{rem:pf} for example.

%]
%[
%Lastly, we note the simply-connectedness of such smoothing $X$.

%]
%[ 2: Hibi toric varieties
\section{Hibi toric varieties}
%[ examples
\subsection{Examples}
Let us begin with simple examples of Hibi toric varieties.
For the empty poset, 
we set the Hibi toric variety $\bP_{\Delta(\varnothing)}$ to be a point.
For a singleton
$u:=\left\{ u \right\}$ (by abuse of notation),
the order polytope is a line segment $\Delta(u) = [0,1]$, and hence,
the Hibi toric variety $\bP_{\Delta(u)}$ is
a projective line $\bP^1$.

Let $P$ be a finite poset consisting of $n:=\abs{P}$ elements.
If $P$ is a {\it chain}, i.e., a totally ordered set,
the order polytope $\Delta(P)$ is a regular simplex 
defined by the inequalities
$
0 \le x_1 \le \dots \le x_n \le 1,
$
so that the Hibi toric variety $\bP_{\Delta(P)}$ is
a projective space $\bP^n$.
It is equally clear the case that $P$ is an {\it anti-chain}, 
i.e., the poset in which every pair of elements is incomparable.
In this case, the order polytope $\Delta(P)$ is a unit hypercube $[0,1]^{n}$,
and the Hibi toric variety $\bP_{\Delta(P)}$ is 
the product of $n$ copies of $\bP^1$.
\begin{example}
 \label{ex:v}
A first non-trivial example is a poset $P=\left\{ u,v,w \right\}$
with the partial order defined by $u \succ w$ and $v \succ w$.
The defining inequalities of
the order polytope $\Delta(P)$ is shown in the left of Figure~\ref{fig:op},
also depicted symbolically in the middle.
It becomes a pyramid in $\bR^P \simeq \bR^3$ 
as shown in the right of Figure~\ref{fig:op}.
Therefore, the associated Hibi toric variety $\bP_{\Delta(P)}$ is 
a projective cone over $\bP^1 \times \bP^1$ with a general apex in $\bP^3$.
%[ fig
\begin{figure}[H]
 \centering
 \begin{tabular}{ccc}
%  \begin{tikzpicture}[scale=0.7,baseline={(0,1)}]
%   \draw (1,3) node[circle] {$1$};
%   \draw (1/2,5/2) node[circle] {\rotatebox{30}{$\le$}};
%   \draw (3/2,5/2) node[circle] {\rotatebox{-30}{$\ge$}};
%   \draw (0,2) node[circle] {$x_u$};
%   \draw (2,2) node[circle] {$x_v$};
%   \draw (1/2,3/2) node[circle] {\rotatebox{-30}{$\ge$}};
%   \draw (3/2,3/2) node[circle] {\rotatebox{30}{$\le$}};
%   \draw (1,1) node[circle] {$x_w$};
%   \draw (1,1/2) node[circle] {\rotatebox{90}{$\le$}};
%   \draw (1,0) node[circle] {$0$};
%  \end{tikzpicture} 
  \includegraphics{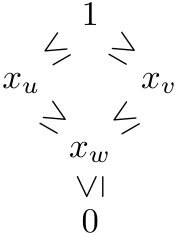}
  &
%  \begin{tikzpicture}[scale=0.266, baseline={(0,0.64)}]
%   \draw[black] (1,3) -- (0,2) -- (1,1) -- (1,0);
%   \draw[black] (1,3) -- (2,2) -- (1,1);
%   \filldraw[black] (2,2) circle (3pt);
%   \filldraw[black] (1,1) circle (3pt);
%   \filldraw[black] (0,2) circle (3pt);
%   \draw (1,0-0.225) node[circle] {*};
%   \draw (1,3-0.225) node[circle] {*};
%  \end{tikzpicture} 
  \includegraphics{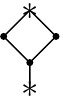}
  & 
%  \begin{tikzpicture}[scale=1, baseline={(0,3/4)}]
%   \draw[black] (0,0) -- (1,0) -- (1,1) -- (3/2,1/5) -- (1,0);
%   \draw[black] (0,0) -- (1,1);
%   \draw[black, densely dotted] (0,0) -- (1/2,1/5) -- (3/2,1/5);
%   \draw[black, densely dotted] (1/2,1/5) -- (1,1);
%  \end{tikzpicture}
  \includegraphics{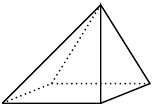}  
 \end{tabular}
 \caption{An example of order polytopes}
 \label{fig:op}
\end{figure}

%]
\end{example}

A {\it disjoint union} $P = P_1+ P_2$ of finite posets $P_1$ and $P_2$
is a disjoint union as sets 
equipped with the partial order $\prec$ satisfying
(i) $u\in P_1$, $v\in P_1$ and $u \prec v \in P_1$ imply $u\prec v \in P$,
(ii) $u\in P_2$, $v\in P_2$ and $u \prec v \in P_2$ imply $u\prec v \in P$,
and 
(iii) $u \in P_1$ and  $v\in P_2$ imply $u \not \sim v \in P$ (i.e., $u$ and $v$ are incomparable
in $P$). 
The corresponding Hibi toric variety is
projectively equivalent to
the product of two Hibi toric varieties,
\begin{align}
 \bP_{\Delta(P_1+P_2)} \simeq
 \bP_{\Delta(P_1)} \times \bP_{\Delta(P_2)}.
\end{align}
A {\it ordinal sum} $P = P_1 \oplus P_2$ of $P_1$ and $P_2$
is a disjoint union as sets 
equipped with the partial order $\prec$ satisfying
the same (i) and (ii) as the disjoint union $P_1+P_2$ above,
and (iii)$'$ $u \in P_1$ and  $v\in P_2$ imply $u \prec v \in P$.
Note that the operation $\oplus$ is not commutative though it is associative.
The corresponding Hibi toric variety is
a special hyperplane section
of a projective join 
of two Hibi toric varieties with general positions,
\begin{align}
 \bP_{\Delta(P_1\oplus u \oplus P_2)} \simeq
 \join \left(\bP_{\Delta(P_1)}, \bP_{\Delta(P_2)} \right).
\end{align}
These operations generalize the examples,
a chain $P=\bigoplus_{i=1}^n u_i$, 
an anti-chain $P=\sum_{i=1}^n u_i$,
and $P=w \oplus (u+v) = \varnothing \oplus w \oplus (u+v)$ in Example~\ref{ex:v}. 

The posets built up 
by disjoint unions and ordinal sums
from singletons are sometimes called series-parallel posets.
One of the simplest examples that are not series-parallel is the poset 
with the {\it Hasse diagram}:
\begin{align}
 \label{eq:N}
 \centering
 \begin{tikzpicture}[scale=0.4, baseline={(0,0.15)}]
 \draw[black] (0,0) -- (0,1) -- (1,0) -- (1,1);
 \filldraw[black] (0,0) circle (2pt);
 \filldraw[black] (0,1) circle (2pt);
 \filldraw[black] (1,0) circle (2pt);
 \filldraw[black] (1,1) circle (2pt);
\end{tikzpicture}
\end{align}
Recall that, in a Hasse diagram for a poset $P$, 
a vertex represents an element of $P$
and an oriented edge represents a {\it covering relation} 
$u \precdot v$ on $P$, that is,
\begin{align}
 u \prec v \text{ and there is no } w\in P \text{ such that } u \prec w \prec v.
\end{align}
For example,
the Hasse diagram (\ref{eq:N}) represents the poset $P=\left\{ a,b,c,d \right\}$ with
$a \prec b \succ c \prec d$.
The associated Hibi toric variety $\bP_{\Delta(P)}$
is a limit of a toric degeneration of a general linear section fourfold
in a Grassmannian $G(2,5)$.

%]
%[ invariant subvarieties and singularities
\subsection{Invariant subvarieties and singularities}
Invariant subvarieties of Hibi toric varieties are again
(projectively equivalent to) lower dimensional Hibi toric varieties.
We follow the description of invariant subvarieties by Wagner \cite{MR1382045}.

Let $P$ be a finite poset. The associated
bounded poset is defined as
\begin{align}
 \hat{P} := \hat 0 \oplus P \oplus \hat 1,
\end{align}
where $\hat 0$ and $\hat 1$ are singletons.
By definition, the elements $\hat 0$ and $\hat 1$ are
the unique minimal and the maximal elements in $\hat{P}$,
respectively.
Note that the Hasse diagram of $\hat P$ 
can be regarded as the graph describing the defining inequalities of
order polytope $\Delta(P)$, as we see in the middle of Figure~\ref{fig:op}.
We use this identification between inequalities with edges,
and variables with vertices for the Hasse diagram of $\hat P$.
Furthermore, by abuse of notation, 
we write the same symbol $P$ as the Hasse diagram of $P$.
For example, we say
that $P$ is {\it connected} if the Hasse diagram of $P$ is connected,
and $P$ is a {\it cycle} if the Hasse diagram of $P$ is a cycle 
as an unoriented graph, and so on.
\begin{definition}
 Let $\hat{P}$ be a bounded poset.
 A surjective order-preserving map
 \begin{align}
 \varphi\colon \hat{P} \rightarrow \hat{P}' 
 \end{align}
 with $\varphi(\hat 0) =\hat 0$ and
 $\varphi(\hat 1) =\hat 1$
 is called a \textit{contraction} of $\hat{P}$ if
 every fiber is connected and 
 there exists a covering relation $u \precdot v \in \hat{P}$
 for all $\bar u \precdot \bar v \in \hat{P}'$
 such that $\bar u=\varphi(u)$ and $\bar v=\varphi(v)$.
\end{definition}
There is a one-to-one correspondence between 
faces of order polytope $\Delta(P)$ and contractions of
the associated bounded poset $\hat{P}$.
More precisely, a face $\theta_\varphi$ corresponding to a contraction 
$\varphi \colon \hat{P} \rightarrow \hat{P'}$
is unimodular equivalent to the order polytope $\Delta(P')$.
In other words,
the associated invariant subvariety
of a Hibi toric variety $\bP_{\Delta(P)}$ is
projectively equivalent to the Hibi toric variety $\bP_{\Delta(P')}$,
as mentioned at the beginning of this subsection.
In particular, we have one-to-one correspondences between
facets of $\Delta(P)$ and edges of $\hat{P}$,
and vertices of $\Delta(P)$ and {\it order ideals} of $P$.
Here an order ideal is defined as a subset $\tau \subset P$ satisfying
\begin{align}
 u\in \tau, v\in P \text{ and } u \succ v \text{ imply } v \in \tau.
\end{align}
Let us write the set of edges of $\hat{P}$ as $E = \Edges(\hat{P})$,
and the set of order ideals of $P$ as $J(P)$.

We illustrate the correspondences
by using the poset $P$ in Example~\ref{ex:v}.
We have five facets corresponding to $E$, and
five vertices corresponding to $J(P)$.
The defining equalities of a face $\theta_\varphi$
can be obtained 
by making all variables in a fiber $\varphi^{-1}(\bar u)$ equal.
%[ figure
\begin{figure}[H]
 \centering
 %[
\begin{minipage}{20mm}
\ \begin{tikzpicture}[scale=1]
 \fill[gray!40!white] (1,1) -- (0,0) -- (1,0);
 \draw[black] (0,0) -- (1,0) -- (1,1) --(3/2,1/5)--(1,0);
 \draw[black] (0,0) -- (1,1);
 \draw[black, densely dotted] (0,0) -- (1/2,1/5) -- (3/2,1/5);
 \draw[black, densely dotted] (1/2,1/5) -- (1,1);
\end{tikzpicture}\\
$
\begin{tikzpicture}[scale=0.2, baseline={(0,0.15)}]
 \draw[black, thick] (1,3) -- (0,2);
 \draw[black] (0,2) -- (1,1) -- (1,0);
 \draw[black] (1,3) -- (2,2) -- (1,1);
 \filldraw[black] (2,2) circle (4pt);
 \filldraw[black] (1,1) circle (4pt);
 \filldraw[black] (0,2) circle (6pt);
 \draw (1,0-0.3) node[circle] {*};
 \draw[black] (1,3-0.3) node[circle] {\large{*}};
 \draw[rotate around={45:(0.5,2.5)}] (0.5,2.5) ellipse (1.5 and 0.7);
\end{tikzpicture}
\rightarrow \!\!
\begin{tikzpicture}[scale=0.2, baseline={(0,0.15)}]
 \draw[black] (1,3) -- (1,2) -- (1,1) -- (1,0);
 \filldraw[black] (1,2) circle (4pt);
 \filldraw[black] (1,1) circle (4pt);
 \draw (1,0-0.3) node[circle] {*};
 \draw[black] (1,3-0.3) node[circle] {\large{*}};
 \draw (1,3) circle (0.6);
\end{tikzpicture}
$\\
\ \begin{tikzpicture}[scale=1]
 \draw[black] (0,0) -- (1,0) -- (1,1)--(3/2,1/5)--(1,0);
 \draw[black] (0,0) -- (1,1);
 \draw[black, densely dotted] (0,0) -- (1/2,1/5) -- (3/2,1/5);
 \draw[black, densely dotted] (1/2,1/5) -- (1,1);
 \filldraw[black] (1/2,1/5) circle (1.2pt);
\end{tikzpicture}\\
$
\begin{tikzpicture}[scale=0.2, baseline={(0,0.15)}]
 \draw[gray] (1,3) -- (0,2);
 \draw[black, thick] (0,2) -- (1,1) -- (1,0);
 \draw[gray] (1,3) -- (2,2);
 \draw[black, thick] (2,2) -- (1,1);
 \filldraw[black] (2,2) circle (6pt);
 \filldraw[black] (1,1) circle (6pt);
 \filldraw[black] (0,2) circle (6pt);
 \draw[black] (1,0-0.3) node[circle] {\large{*}};
 \draw[gray] (1,3-0.3) node[circle] {*};
\end{tikzpicture}
\rightarrow \!\!
\begin{tikzpicture}[scale=0.2, baseline={(0,0.02)}]
 \draw[gray] (1,1) -- (1,0);
 \draw (1,0-0.3) node[circle] {\large{*}};
 \draw[gray] (1,1-0.3) node[circle] {*};
\end{tikzpicture}
$
\end{minipage}
%]
 %[
\begin{minipage}{20mm}
\ \begin{tikzpicture}[scale=1]
 \fill[gray!40!white] (1,1) -- (3/2,1/5) -- (1,0);
 \draw[black] (0,0) -- (1,0) -- (1,1) --(3/2,1/5)--(1,0);
 \draw[black] (0,0) -- (1,1);
 \draw[black, densely dotted] (0,0) -- (1/2,1/5) -- (3/2,1/5);
 \draw[black, densely dotted] (1/2,1/5) -- (1,1);
\end{tikzpicture}\\
$
\begin{tikzpicture}[scale=0.2, baseline={(0,0.15)}]
 \draw[black] (1,3) -- (0,2) -- (1,1) -- (1,0);
 \draw[black] (2,2) -- (1,1);
 \draw[black, thick] (1,3) -- (2,2);
 \filldraw[black] (2,2) circle (6pt);
 \filldraw[black] (1,1) circle (4pt);
 \filldraw[black] (0,2) circle (4pt);
 \draw (1,0-0.3) node[circle] {*};
 \draw[black] (1,3-0.3) node[circle] {\large{*}};
 \draw[rotate around={-45:(1.5,2.5)}] (1.5,2.5) ellipse (1.5 and 0.7);
\end{tikzpicture}
\rightarrow \!\!
\begin{tikzpicture}[scale=0.2, baseline={(0,0.15)}]
 \draw[black] (1,3) -- (1,2) -- (1,1) -- (1,0);
 \filldraw[black] (1,2) circle (4pt);
 \filldraw[black] (1,1) circle (4pt);
 \draw (1,0-0.3) node[circle] {*};
 \draw[black] (1,3-0.3) node[circle] {\large{*}};
 \draw (1,3) circle (0.6);
\end{tikzpicture}
$\\
\ \begin{tikzpicture}[scale=1]
 \draw[black] (0,0) -- (1,0) -- (1,1)--(3/2,1/5)--(1,0);
 \draw[black] (0,0) -- (1,1);
 \draw[black, densely dotted] (0,0) -- (1/2,1/5) -- (3/2,1/5);
 \draw[black, densely dotted] (1/2,1/5) -- (1,1);
 \filldraw[black] (3/2,1/5) circle (1.2pt);
\end{tikzpicture}\\
$
\begin{tikzpicture}[scale=0.2, baseline={(0,0.15)}]
 \draw[gray] (1,3) -- (0,2);
 \draw[black,thick] (0,2) -- (1,1) -- (1,0);
 \draw[gray] (1,3) -- (2,2) -- (1,1);
 \filldraw[gray] (2,2) circle (4pt);
 \filldraw[black] (1,1) circle (6pt);
 \filldraw[black] (0,2) circle (6pt);
 \draw[black] (1,0-0.3) node[circle] {\large{*}};
 \draw[gray] (1,3-0.3) node[circle] {*};
\end{tikzpicture}
\rightarrow \!\!
\begin{tikzpicture}[scale=0.2, baseline={(0,0.02)}]
 \draw[gray] (1,1) -- (1,0);
 \draw (1,0-0.3) node[circle] {\large{*}};
 \draw[gray] (1,1-0.3) node[circle] {*};
\end{tikzpicture}
$
\end{minipage}
%]
 %[
\begin{minipage}{20mm}
\ \begin{tikzpicture}[scale=1]
 \fill[gray!40!white] (1,1) -- (3/2,1/5) -- (1/2,1/5);
 \draw[black] (0,0) -- (1,0) -- (1,1) --(3/2,1/5)--(1,0);
 \draw[black] (0,0) -- (1,1);
 \draw[black, densely dotted] (0,0) -- (1/2,1/5) -- (3/2,1/5);
 \draw[black, densely dotted] (1/2,1/5) -- (1,1);
\end{tikzpicture}\\
$
\begin{tikzpicture}[scale=0.2, baseline={(0,0.15)}]
 \draw[black] (1,3) -- (0,2);
 \draw[black, thick] (1,1) -- (0,2);
 \draw[black] (1,1) -- (1,0);
 \draw[black] (1,3) -- (2,2) -- (1,1);
 \filldraw[black] (0,2) circle (6pt);
 \filldraw[black] (1,1) circle (6pt);
 \filldraw[black] (2,2) circle (4pt);
 \draw (1,0-0.3) node[circle] {*};
 \draw (1,3-0.3) node[circle] {*};
 \draw[rotate around={-45:(0.5,1.5)}] (0.5,1.5) ellipse (1.5 and 0.7);
\end{tikzpicture}
\rightarrow \!\!
\begin{tikzpicture}[scale=0.2, baseline={(0,0.15)}]
 \draw[black] (1,3) -- (1,2) -- (1,1) -- (1,0);
 \filldraw[black] (1,2) circle (4pt);
 \filldraw[black] (1,1) circle (6pt);
 \draw (1,0-0.3) node[circle] {*};
 \draw (1,3-0.3) node[circle] {*};
 \draw (1,1) circle (0.6);
\end{tikzpicture}
$\\
\ \begin{tikzpicture}[scale=1]
 \draw[black] (0,0) -- (1,0) -- (1,1)--(3/2,1/5)--(1,0);
 \draw[black] (0,0) -- (1,1);
 \draw[black, densely dotted] (0,0) -- (1/2,1/5) -- (3/2,1/5);
 \draw[black, densely dotted] (1/2,1/5) -- (1,1);
 \filldraw[black] (0,0) circle (1.2pt);
\end{tikzpicture}\\
$
\begin{tikzpicture}[scale=0.2, baseline={(0,0.15)}]
 \draw[gray] (1,3) -- (0,2) -- (1,1);
 \draw[gray] (1,3) -- (2,2);
 \draw[black, thick] (2,2) -- (1,1) -- (1,0);
 \filldraw[black] (2,2) circle (6pt);
 \filldraw[black] (1,1) circle (6pt);
 \filldraw[gray] (0,2) circle (4pt);
 \draw[black] (1,0-0.3) node[circle] {\large{*}};
 \draw[gray] (1,3-0.3) node[circle] {*};
\end{tikzpicture}
\rightarrow \!\!
\begin{tikzpicture}[scale=0.2, baseline={(0,0.02)}]
 \draw[gray] (1,1) -- (1,0);
 \draw (1,0-0.3) node[circle] {\large{*}};
 \draw[gray] (1,1-0.3) node[circle] {*};
\end{tikzpicture}
$
\end{minipage}
%]
 %[
\begin{minipage}{20mm}
\ \begin{tikzpicture}[scale=1]
 \fill[gray!40!white] (0,0) -- (1,1) -- (1/2,1/5);
 \draw[black] (0,0) -- (1,0) -- (1,1) --(3/2,1/5)--(1,0);
 \draw[black] (0,0) -- (1,1);
 \draw[black, densely dotted] (0,0) -- (1/2,1/5) -- (3/2,1/5);
 \draw[black, densely dotted] (1/2,1/5) -- (1,1);
\end{tikzpicture}\\
$
\begin{tikzpicture}[scale=0.2, baseline={(0,0.15)}]
 \draw[black] (1,3) -- (0,2) -- (1,1) -- (1,0);
 \draw[black] (1,3) -- (2,2);
 \draw[black, thick] (2,2) -- (1,1);
 \filldraw[black] (2,2) circle (6pt);
 \filldraw[black] (1,1) circle (6pt);
 \filldraw[black] (0,2) circle (4pt);
 \draw (1,0-0.3) node[circle] {*};
 \draw (1,3-0.3) node[circle] {*};
 \draw[rotate around={45:(1.5,1.5)}] (1.5,1.5) ellipse (1.5 and 0.7);
\end{tikzpicture}
\rightarrow \!\!
\begin{tikzpicture}[scale=0.2, baseline={(0,0.15)}]
 \draw[black] (1,3) -- (1,2) -- (1,1) -- (1,0);
 \filldraw[black] (1,2) circle (4pt);
 \filldraw[black] (1,1) circle (6pt);
 \draw (1,0-0.3) node[circle] {*};
 \draw (1,3-0.3) node[circle] {*};
 \draw (1,1) circle (0.6);
\end{tikzpicture}
$\\
\ \begin{tikzpicture}[scale=1]
 \draw[black] (0,0) -- (1,0) -- (1,1)--(3/2,1/5)--(1,0);
 \draw[black] (0,0) -- (1,1);
 \draw[black, densely dotted] (0,0) -- (1/2,1/5) -- (3/2,1/5);
 \draw[black, densely dotted] (1/2,1/5) -- (1,1);
 \filldraw[black] (1,0) circle (1.2pt);
\end{tikzpicture}\\
$
\begin{tikzpicture}[scale=0.2, baseline={(0,0.15)}]
 \draw[gray] (1,3) -- (0,2) -- (1,1);
 \draw[black,thick] (1,1)--(1,0);
 \draw[gray] (1,3) -- (2,2) -- (1,1);
 \filldraw[gray] (2,2) circle (4pt);
 \filldraw[black] (1,1) circle (6pt);
 \filldraw[gray] (0,2) circle (4pt);
 \draw[black] (1,0-0.3) node[circle] {\large{*}};
 \draw[gray] (1,3-0.3) node[circle] {*};
\end{tikzpicture}
\rightarrow \!\!
\begin{tikzpicture}[scale=0.2, baseline={(0,0.02)}]
 \draw[gray] (1,1) -- (1,0);
 \draw (1,0-0.3) node[circle] {\large{*}};
 \draw[gray] (1,1-0.3) node[circle] {*};
\end{tikzpicture}
$
\end{minipage}
%]
 %[
\begin{minipage}{20mm}
\ \begin{tikzpicture}[scale=1]
 \fill[gray!40!white] (0,0) -- (1/2,1/5) -- (3/2,1/5) -- (1,0);
 \draw[black] (0,0) -- (1,0) -- (1,1) --(3/2,1/5)--(1,0);
 \draw[black] (0,0) -- (1,1);
 \draw[black, densely dotted] (0,0) -- (1/2,1/5) -- (3/2,1/5);
 \draw[black, densely dotted] (1/2,1/5) -- (1,1);
\end{tikzpicture}\\
$
\begin{tikzpicture}[scale=0.2, baseline={(0,0.15)}]
 \draw[black] (1,3) -- (0,2) -- (1,1);
 \draw[black, thick] (1,1) -- (1,0);
 \draw[black] (1,3) -- (2,2) -- (1,1);
 \filldraw[black] (2,2) circle (4pt);
 \filldraw[black] (1,1) circle (6pt);
 \filldraw[black] (0,2) circle (4pt);
 \draw[black] (1,0-0.3) node[circle] {\large{*}};
 \draw (1,3-0.3) node[circle] {*};
 \draw (1,0.5) ellipse (0.7 and 1.5);
\end{tikzpicture}
\rightarrow 
\begin{tikzpicture}[scale=0.2, baseline={(0,0.1)}]
 \draw[black] (1,2) -- (0,1) -- (1,0) -- (2,1) -- (1,2);
 \filldraw[black] (0,1) circle (4pt);
 \filldraw[black] (2,1) circle (4pt);
 \draw (1,2-0.3) node[circle] {*};
 \draw[black] (1,0-0.3) node[circle] {\large{*}};
 \draw (1,0) circle (0.6);
\end{tikzpicture}
$\\
\ \begin{tikzpicture}[scale=1]
 \draw[black] (0,0) -- (1,0) -- (1,1)--(3/2,1/5)--(1,0);
 \draw[black] (0,0) -- (1,1);
 \draw[black, densely dotted] (0,0) -- (1/2,1/5) -- (3/2,1/5);
 \draw[black, densely dotted] (1/2,1/5) -- (1,1);
 \filldraw[black] (1,1) circle (1.2pt);
 \draw (1,1) circle (4pt);
\end{tikzpicture}\\
$
\begin{tikzpicture}[scale=0.2, baseline={(0,0.15)}]
 \fill[gray!40!white] (1,3) -- (0,2) -- (1,1) -- (2,2) -- (1,3) -- cycle;
 \draw[gray] (1,3) -- (0,2) -- (1,1) -- (2,2) -- (1,3) -- cycle;
 \draw[black] (1,0) -- (1,1);
 \filldraw[gray] (2,2) circle (4pt);
 \filldraw[gray] (1,1) circle (4pt);
 \filldraw[gray] (0,2) circle (4pt);
 \draw[black] (1,0-0.3) node[circle] {\large{*}};
 \draw[gray] (1,3-0.3) node[circle] {*};
\end{tikzpicture}
\rightarrow \!\!
\begin{tikzpicture}[scale=0.2, baseline={(0,0.02)}]
 \draw[gray] (1,1) -- (1,0);
 \draw (1,0-0.3) node[circle] {\large{*}};
 \draw[gray] (1,1-0.3) node[circle] {*};
\end{tikzpicture}
$
\end{minipage}
%]
\caption{One-to-one correspondence between faces and contractions}
\label{fig:faces}
\end{figure}
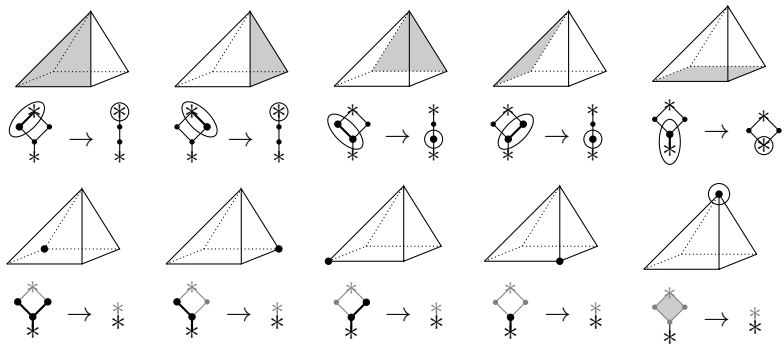
%]

In Figure~\ref{fig:faces}, four facets of the order polytope $\Delta(P)$
are meeting at the same vertex circled.
Hence the corresponding point should be singular in $\bP_{\Delta(P)}$. 
In general, 
a singular locus comes from a contraction
replacing more inequalities to equalities than codimension.
A subposet $C\subset \hat P$ is said to be {\it convex} if it satisfies
\begin{align}
 u\in C, v \in C \text{ and } u\prec w \prec v  \text{ imply } w \in C.
\end{align}
Furthermore, let us call $C\subset \hat P$ a {\it minimal convex cycle}
if $C$ is (i) a full subposet not containing both $\hat 0$ and $\hat 1$, and 
(ii) a convex cycle such that all convex full subposets $C'\subset C$ are trees.
\begin{theorem}[{\cite[Corollary 2.4]{MR1382045}}]
 Let $P$ be a finite poset.
 An irreducible singular locus of $\bP_{\Delta(P)}$
 corresponds to a minimal convex cycle $C \subset \hat P$.
 For the corresponding contraction,
 all the fibers are singletons except one fiber $C$.
\end{theorem}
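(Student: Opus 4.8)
The plan is to pass to the normal fan $\Sigma$ of $\Delta(P)$ and run the orbit--cone dictionary. For a contraction $\varphi\colon\hat P\to\hat P'$ let $\sigma_\varphi\in\Sigma$ be the cone dual to the face $\theta_\varphi$ and $O_\varphi\subset\bP_{\Delta(P)}$ the associated torus orbit. Since a toric variety is smooth along $O_\varphi$ precisely when $\sigma_\varphi$ is a smooth cone, and $\Sing\bP_{\Delta(P)}=\bigcup_{\sigma_\varphi\text{ not smooth}}\overline{O_\varphi}$, the irreducible components of the singular locus are the orbit closures $\overline{O_\varphi}$ for which $\sigma_\varphi$ is non-smooth while every proper face of $\sigma_\varphi$ is smooth. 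So the first step is to make $\sigma_\varphi$ explicit. Its rays are dual to the facets of $\Delta(P)$ through $\theta_\varphi$, hence to the covering relations $u\precdot v$ of $\hat P$ with $u,v$ in a common fiber of $\varphi$; concretely $\sigma_\varphi$ is generated by the primitive vectors $e_v-e_u$, with the convention $e_{\hat 0}=e_{\hat 1}=0$, ranging over these internal edges. As $\dim\sigma_\varphi=\operatorname{codim}\theta_\varphi=\abs{P}-\abs{P'}=\sum_Q(\abs{Q}-1)$ over the fibers $Q$ of $\varphi$, the cone is simplicial exactly when each (connected) fiber $Q$ has only $\abs{Q}-1$ internal edges, i.e.\ when every fiber is a tree in the Hasse diagram of $\hat P$.

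I would then upgrade this to the full smoothness criterion: $\sigma_\varphi$ is smooth if and only if every fiber of $\varphi$ is a tree. One direction is the count just made. Conversely, assuming all fibers are trees, root each fiber at $\hat 0$ or $\hat 1$ if it meets them and arbitrarily otherwise. A fiber $Q$ disjoint from $\{\hat 0,\hat 1\}$ contributes the incidence vectors of a tree on $Q$, which span the sum-zero sublattice of $\bZ^Q$, a direct summand; a fiber meeting $\hat 0$ (or $\hat 1$) contributes vectors which, after dropping the identically zero coordinate, form a lower-triangular unimodular system and hence span all of $\bZ^{Q\setminus\{\hat 0\}}$. Distinct fibers occupy disjoint coordinate blocks of $\bZ^P$, so the ray generators of $\sigma_\varphi$ together span a direct summand of $\bZ^P$ and extend to a $\bZ$-basis; thus $\sigma_\varphi$ is smooth.

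It remains to pin down the minimal non-smooth cones. The key auxiliary fact is that for any connected convex subposet $C\subseteq\hat P$ not containing both $\hat 0$ and $\hat 1$, collapsing $C$ and keeping all other elements as singletons defines a contraction $\varphi_C$ — convexity makes the quotient order a partial order, and the covering-lifting condition is verified by tracing a cover of the quotient back through $\hat P$ — with $\sigma_{\varphi_C}$ generated by the edge vectors of the induced Hasse subdiagram $\hat P[C]$, so $\sigma_{\varphi_C}$ is non-smooth exactly when $\hat P[C]$ is not a tree. Now if $\sigma_\varphi$ is non-smooth and minimal, some fiber is a non-tree; if $\varphi$ had more than one non-singleton fiber, keeping a non-tree fiber $Q$ and splitting every other fiber into singletons would give the contraction $\varphi_Q$ whose cone is a strictly smaller non-smooth face of $\sigma_\varphi$, a contradiction; hence $\varphi=\varphi_C$ for one convex $C$ with $\hat P[C]$ non-tree. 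Conversely, the proper faces of $\sigma_{\varphi_C}$ correspond to the valid refinements of the partition of $\varphi_C$, whose blocks are convex connected proper subposets of $C$; when $\hat P[C]$ is an induced cycle all such blocks are paths, hence trees, so by the criterion above every proper face is smooth and $\sigma_{\varphi_C}$ is indeed a minimal non-smooth cone. Matching the inclusion-minimal convex subposets with cyclic Hasse diagram to the minimal convex cycles of the definition then yields the claimed bijection and the stated fiber structure, namely a single fiber $C$ and singletons otherwise.

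The step I expect to be the main obstacle is showing that an inclusion-minimal convex subposet $C\subseteq\hat P$ whose Hasse diagram contains a cycle is itself a convex cycle — equivalently, that if $\hat P[C]$ is non-tree but not a cycle then $\sigma_{\varphi_C}$ is not minimal, which requires producing a proper convex connected subposet $C'\subsetneq C$ with $\hat P[C']$ still non-tree. The delicate point is that convexity in $\hat P$ only permits deleting minimal or maximal elements of $C$, so the usual graph-theoretic argument (delete a non-cut vertex lying off a shortest cycle) must be organized so that the deleted vertex is extremal in the poset while keeping $\hat P[C']$ connected and cyclic; this poset-sensitive case analysis is exactly what the cited theorem of Wagner supplies, and the remainder of the argument is bookkeeping within the orbit--cone correspondence.
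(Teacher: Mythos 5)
The paper does not actually prove this statement: it is quoted directly from Wagner \cite[Corollary 2.4]{MR1382045}, so there is no internal argument to compare yours against. That said, your framework is the right one and is consistent with the toric computations the paper does carry out elsewhere (the description of the cones via $\delta(E\setminus E(\tau))$ and the spanning-tree argument in the proof of Proposition~\ref{prop:small} are exactly your smoothness criterion, specialized to maximal cones). Identifying the rays of $\sigma_\varphi$ with the Hasse-diagram edges internal to the fibers of $\varphi$, proving that $\sigma_\varphi$ is smooth if and only if every fiber is a tree, verifying that collapsing a single connected convex subposet is a legitimate contraction, and reducing the classification of irreducible components of $\Sing \bP_{\Delta(P)}$ to that of face-minimal non-smooth cones are all correct and cleanly executed.

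However, there is a genuine gap, and you have located it yourself: the claim that a face-minimal non-smooth cone corresponds to a single non-singleton fiber $C$ whose induced Hasse diagram is literally a cycle (rather than merely containing one) is the entire content of Wagner's corollary, and you dispose of it by saying that this "is exactly what the cited theorem of Wagner supplies." Since Wagner's corollary is the statement being proved, this is circular, and the proof is incomplete precisely at its combinatorial core. Concretely, what is missing is the following assertion: if $C\subset\hat P$ is connected and convex and its induced Hasse diagram contains a cycle but is not itself a single cycle, then there is a \emph{proper} connected convex full subposet $C'\subsetneq C$ whose induced Hasse diagram still contains a cycle. A natural candidate is the convex hull in $\hat P$ of the vertex set of a shortest cycle $Z$ of the induced diagram (shortestness excludes chords, and convexity of $C$ keeps the hull inside $C$), but one must then show that this hull is a proper subset of $C$, or else handle directly the situation where elements of $\hat P$ lie strictly between non-adjacent vertices of $Z$; your observation that convexity only permits deleting order-extremal elements is exactly why this case analysis cannot be replaced by the generic graph-theoretic argument, and it is the part you would need to supply to make the proof self-contained.
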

%（またcodim 3の特異点はすべてODPであることが分かる。）

\begin{remark}
 \label{rem:hibi}
Now it is worth noting the homogeneous coordinate rings of Hibi toric varieties.
Since all the lattice points in $\Delta(P)$ are vertices, 
the homogeneous coordinate ring of $\bP_{\Delta(P)}$ is
a {\it Hibi algebra},
\begin{align}
 \label{eq:hcr}
 A_{J(P)} = \bC\left[ J(P)\right]/I_{J(P)},
\end{align}
where 
$\bC\left[J(P)\right]$ is the polynomial $\bC$-algebra in variables $p_\tau$ for $\tau\in J(P)$,
and $I_{J(P)}$ is the ideal coming from linear relations of vertices of $\Delta(P)$.
In fact, the ideal $I_{J(P)}$ has
quadratic generators,
\begin{align}
 p_\alpha p_\beta - 
 p_{\alpha\vee \beta} p_{\alpha \wedge \beta} \text{ for all } \alpha \not \sim \beta \in J(P).
\end{align}
Note here that $J(P)$ is a {\it lattice}, i.e., a poset 
with the least upper bound $\alpha \vee \beta$ %(called the join)  
and the greatest lower bound $\alpha \wedge \beta$ %(called the meet)
for each pair of elements.
In fact, $J(P)$ with the partial
order given by set inclusions is equipped with
$\alpha \vee \beta = \alpha \cup \beta$ 
and $\alpha \wedge \beta = \alpha \cap \beta$.
Furthermore, $J(P)$ becomes a {\it distributive lattice}, i.e.,
the lattice
with distributive laws,
\begin{align}
 \alpha \wedge (\beta \vee \gamma) = (\alpha \wedge \beta) \vee (\alpha \wedge \gamma)
 \quad \text{ for all } \alpha, \beta, \gamma \in J(P).
\end{align}
One may start from 
finite distributive lattices instead of finite posets,
which gives another description of Hibi toric varieties in literature.
\end{remark}
\begin{example}
 \label{ex:v2}
 Under the notation in Remark~\ref{rem:hibi},
 the Hibi toric variety $\bP_{\Delta(P)}$ in Example~\ref{ex:v} is 
 embedded as a quadric threefold in $\bP^4$ defined by
 \begin{align}
  p_{\begin{tikzpicture}[scale=0.1, baseline={(0,0.08)}]
     \draw[black] (0,1) -- (1,0);
     \filldraw[black] (0,1) circle (6pt);
     \filldraw[black] (1,0) circle (6pt);
   \end{tikzpicture}}\,
  p_{\begin{tikzpicture}[scale=0.1, baseline={(0,0.08)}]
     \draw[black] (0,0) -- (1,1);
     \filldraw[black] (0,0) circle (6pt);
     \filldraw[black] (1,1) circle (6pt);
    \end{tikzpicture}}
  -
  p_{\begin{tikzpicture}[scale=0.1, baseline={(0,0.08)}]
     \draw[black] (0,1) -- (1,0) -- (2,1);
     \filldraw[black] (0,1) circle (6pt);
     \filldraw[black] (1,0) circle (6pt);
     \filldraw[black] (2,1) circle (6pt);
   \end{tikzpicture}}\, 
  p_{\begin{tikzpicture}[scale=0.1, baseline={(0,0.08)}]
     \draw[black] (0,0);
     \filldraw[black] (0,0) circle (6pt);
    \end{tikzpicture}}
    =0.
 \end{align}
\end{example}

%]
%[ divisors
\subsection{Divisors}
Let $P$ be a finite poset.
For each edge $e\in E = \Edges (\hat P)$,
we have the corresponding invariant prime divisor,
denoted by $D_e$.
We write $D_{E'}=\sum_{e\in E'}D_e$ for each subset $E' \subset E$.
The linear equivalences in the divisor class group $\Cl(\bP_{\Delta(P)})$
are generated by the following relations:
\begin{align}
\label{eq:lin}
 \sum_{s(e)=u} D_e \simeq \sum_{t(e')=u} D_{e'} \quad \text{ for all } u \in P, 
\end{align}
where we write $t(e) \precdot s(e)$ for each $e = \left( s(e), t(e) \right) \in E \subset
\hat{P} \times \hat{P}$.

Let us describe the Picard group of $\bP_{\Delta(P)}$.
First, suppose $P$ is connected. 
For an order ideal $\tau \subset P$ and a subset $E' \subset E$,
a set of edges
\begin{align}
 \label{eq:etau}
 E'(\tau) = \left\{ e \in E' \mid t(e) \in \hat{0}\oplus \tau \text{ and } 
 s(e) \not\in \hat{0}\oplus\tau \right\}
\end{align}
defines a Weil divisor
$D_{E'(\tau)}$
on $\bP_{\Delta(P)}$.
We have $D_{E(\tau)} \simeq D_{E(P)}$ for all $\tau \in J(P)$.
%since the linear equivalences (\ref{eq:lin}).
The divisor $D_{E(P)}$ is, in fact, the Cartier divisor
corresponding to the lattice polytope $\Delta(P)$ itself.
One can show that the Picard group %$\Pic(\bP_{\Delta(P)})$ 
is isomorphic to $\bZ$ 
generated by the associated very ample invertible sheaf 
$
 \scO(1) = \scO(D_{E(P)}).
$

More generally, 
suppose $P = \sum_{j=1}^\rho P_j$ with $\rho$ connected components $P_1, \dots, P_\rho$.
Note that there is a natural decomposition as sets
\begin{align}
 E = \bigsqcup_{j=1}^\rho E_j,
\end{align}
where $E_j:= \Edges(\hat{P}_j) \subset E$ for each $j$.
The Picard group $\Pic(\bP_{\Delta(P)})$ becomes a free abelian group of rank $\rho$
generated by 
\begin{align}
\scL_j := \scO\left(D_{E_j(P_j)}\right)
\end{align}
for
each connected component $P_j \subset P$.
We have
\begin{align}
 \label{eq:O1}
 \scO(1) = \scO(D_{E(P)}) = \bigotimes_{j=1}^\rho \scL_j.
\end{align}

Next, we note a formula for the self-intersection number, 
\begin{align}
 \label{eq:degree}
 \left(D_{E(P)}^n\right) = n! \Vol \Delta(P)
 = c_{J(P)},
\end{align}
where $c_{J(P)}$ denotes the number of maximal chains on $J(P)$.
It follows from a formula for the Hilbert--Poincar\'e series of Hibi algebra $A_{J(P)}$
obtained by \cite[Corollary of Lemma 5]{MR790025}.

Lastly, let us suppose $P$ is {\it pure}.
Recall that a finite poset $P$ is called pure 
if every maximal chain on $P$ has the same length.
We define a height $h(u)$ of $u \in \hat P$ as the length of
the longest chain bounded above by $u$  in $\hat P$, and
write $h_P=h(\hat{1})$.
Thus an anti-canonical divisor $-K_{\bP_{\Delta(P)}} = D_E$ is written as
\begin{align}
 -K_{\bP_{\Delta(P)}} = \sum_{k=1}^{h_P} D_{E(\tau_k)} \simeq h_P D_{E(P)},
\end{align}
where 
$
 \tau_k := \left\{ u \in P \mid h(u) < k \right\} \in J(P)
$
for $k=1,\dots, h_P$.
Together with (\ref{eq:O1}), 
it turns out that
the Hibi toric variety $\bP_{\Delta(P)}$ 
for a pure poset $P$ is a Gorenstein Fano variety
with $\omega^\vee= \scO(-K_{\bP_{\Delta(P)}}) \simeq \scO(h_P)$.
Moreover, 
one can show that it has at worst terminal singularities,
by looking at the normal fan $\Sigma$ of $\Delta(P)$
(see \cite[Lemma 1.4]{MR2770546}).

%]
%[ small resolution
\subsection{Small resolution}
Let $P$ be a finite pure poset.
The associated Hibi toric variety $\bP_{\Delta(P)}$
is a Gorenstein terminal Fano variety with $\omega^\vee \simeq \scO(h_P)$.
If $\bP_{\Delta(P)}$ is $\bQ$-factorial in addition,
it turns out to be non-singular, and even more, a product of projective spaces
by \cite[Corollary 2.4]{MR2770546}.
Even if it is not $\bQ$-factorial, 
we have the following property indicating the mildness of singularities of $\bP_{\Delta(P)}$.
\begin{proposition}
 \label{prop:small}
 For a finite pure poset $P$,
 any toric  crepant $\bQ$-factorialization
 of the Hibi toric variety $\bP_{\Delta(P)}$
 is a small resolution. 
\end{proposition}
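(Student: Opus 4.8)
Throughout, write $\Sigma$ for the normal fan of the order polytope $\Delta(P)$, so that $\bP_{\Delta(P)} = X_\Sigma$, and note that $\Sigma$ is complete since $\bP_{\Delta(P)}$ is projective. A toric $\bQ$-factorialization $f\colon Y \to \bP_{\Delta(P)}$ is a proper birational toric morphism with $Y$ $\bQ$-factorial; on fans it is given by a subdivision $\Sigma'$ of $\Sigma$ with the same support, all of whose cones are simplicial, with $Y = X_{\Sigma'}$. The plan has two steps: first, crepancy forces $\Sigma'$ to have the same rays as $\Sigma$, so $f$ is small; second, the ray generators of $\Sigma$ form a totally unimodular configuration, which makes every maximal cone of $\Sigma'$ nonsingular.

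For the first step I would use that $\bP_{\Delta(P)}$ is Gorenstein, so $-K_{\bP_{\Delta(P)}} = \sum_\rho D_\rho$ is Cartier; let $\psi$ be the associated piecewise-linear function on $|\Sigma|$, linear on each cone of $\Sigma$ and normalized by $\psi(u_\rho) = 1$ on every primitive ray generator $u_\rho$ of $\Sigma$. Since $\bP_{\Delta(P)}$ has at worst terminal singularities, for each cone $\sigma\in\Sigma$ the only lattice points $v\in\sigma$ with $\psi(v)\le 1$ are $0$ and the primitive generators of the rays of $\sigma$. If $\Sigma'$ had a ray $\rho'$ not in $\Sigma$, then choosing the minimal cone $\sigma\in\Sigma$ containing $u_{\rho'}$, the discrepancy of $D_{\rho'}$ would be $\psi(u_{\rho'}) - 1 > 0$, contradicting $K_Y = f^*K_{\bP_{\Delta(P)}}$. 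So $\Sigma$ and $\Sigma'$ share their $1$-skeleton; since the exceptional locus of a birational toric morphism is torus-invariant, any $f$-exceptional divisor would be a $D_{\rho'}$ with $\rho'$ a new ray, and as there are none, $f$ is small.

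For the second step, recall from the bijection between facets of $\Delta(P)$ and edges $e\in\Edges(\hat P)$, together with the linear equivalences~(\ref{eq:lin}), that the primitive generator in $N\cong\bZ^P$ of the ray of $\Sigma$ attached to an edge $e$, with $t(e)\precdot s(e)$, is $u_e = \mathbf{1}_{s(e)} - \mathbf{1}_{t(e)}$, where $\mathbf{1}_u$ is the standard basis vector of $\bZ^P$ for $u\in P$ and $\mathbf{1}_{\hat 0} = \mathbf{1}_{\hat 1} := 0$. Arranging the $u_e$ as the columns of a matrix $U$ with rows indexed by $P$, one sees that $U$ is the incidence matrix of the Hasse diagram of $\hat P$ (edges oriented from $t(e)$ to $s(e)$) with the two rows indexed by $\hat 0$ and $\hat 1$ deleted; since incidence matrices of directed graphs, and all their submatrices, are totally unimodular, $U$ is totally unimodular. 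Now let $\sigma'$ be a maximal cone of $\Sigma'$. Being simplicial in the complete fan $\Sigma'$, it is $\abs{P}$-dimensional and is spanned by $\abs{P}$ of the vectors $u_e$, which are therefore linearly independent; the corresponding $\abs{P}\times\abs{P}$ submatrix of $U$ has nonzero determinant, hence determinant $\pm 1$, so these vectors form a $\bZ$-basis of $N$. Thus $\sigma'$ is a nonsingular cone, $U_{\sigma'}$ is smooth, $Y$ is nonsingular, and, combined with the first step, $f$ is a small resolution.

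The step I expect to be the main obstacle is the first one: pinning ``crepant'' down to ``no new rays'' genuinely relies on the Gorenstein and terminal properties of $\bP_{\Delta(P)}$ recalled in the previous subsection. By contrast, once the ray generators of $\Sigma$ are written out, total unimodularity makes nonsingularity of $Y$ automatic, with no case analysis of the (in general nonsimplicial) maximal cones of $\Sigma$ and no appeal to the classification of the singular loci by minimal convex cycles.
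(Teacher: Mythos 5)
Your proof is correct, and its two-step architecture coincides with the paper's: first use the Gorenstein terminal property of $\bP_{\Delta(P)}$ (via \cite[Lemmas 1.3--1.5]{MR2770546}) to conclude that a crepant $\bQ$-factorialization introduces no new rays and is therefore a small modification, then show that every maximal simplicial cone on the old rays is in fact unimodular. The only genuine divergence is in the second step. The paper identifies the ray generators spanning a maximal cone $\sigma=\Cone\delta(B)$ with an edge set $B\subset\Edges(\hat P)$ such that $(\hat P,B)$ is a disjoint union of two trees (one through $\hat 0$, one through $\hat 1$, as in Figure~\ref{fig:small}), and extracts a $\bZ$-basis of $N=\bZ P$ by following the unique unoriented path from each $u\in P$ to $\hat 0$ or $\hat 1$. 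You instead observe that the matrix of all ray generators is the incidence matrix of the Hasse diagram of $\hat P$ with the rows of $\hat 0$ and $\hat 1$ deleted, hence totally unimodular, so any $\abs{P}$ linearly independent columns automatically form a $\bZ$-basis. These are two packagings of the same combinatorial fact --- total unimodularity of graph incidence matrices is classically proved by exactly the spanning-forest expansion the paper carries out by hand --- but your version is cleaner in that it cites an off-the-shelf lemma, needs no description of the maximal cones $\sigma_\tau$ of the original fan, and makes it transparent that \emph{any} simplicial subdivision on the given rays is nonsingular. Your first step is also spelled out in more detail than the paper's one-line appeal to terminality (the discrepancy computation via the support function $\psi$), which is a welcome expansion rather than a deviation. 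One cosmetic remark: your $u_e=\mathbf{1}_{s(e)}-\mathbf{1}_{t(e)}$ is the negative of the paper's $\delta(e)=\mathrm{pr}(t(e)-s(e))$, an inner-versus-outer normal convention that affects nothing.
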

\begin{proof}
 Let $P$ be a finite poset, $N = \bZ P$ and $M = \bZ^P$
 the free abelian groups of rank $n=\abs{P}$ dual to each other, 
 and $N_\bR =\bR P$ and $M_\bR = \bR^P$ the real scalar extensions, respectively.

 First, let us see a description of 
 the normal fan $\Sigma$ in $N_\bR$ for 
 the order polytope $\Delta(P) \subset M_\bR$.
 By definition, a one-dimensional cone in $\Sigma$ 
 is generated by the normal vector of a facet of $\Delta(P)$.
 Hence it corresponds to an edge of $\hat{P}$.
 Let $\delta(e) \in N$ denote such primitive vector 
 associated with $e \in E$,
 The map $\delta$ is extended
 to be the composite linear map
 $\delta = \mathrm{pr}\circ \partial \colon \bZ E \rightarrow N$ of
 \begin{align}
  \partial \colon \bZ E \rightarrow \bZ \hat{P}
  = N \oplus \bZ \hat 0 \oplus \bZ \hat 1, \quad  
  e \mapsto \partial(e):= t(e)-s(e)
 \end{align}
 and a projection $\mathrm{pr} \colon N \oplus \bZ \hat 0 \oplus \bZ \hat 1 \rightarrow N$.
 By using the same symbol $\delta$ as the real extension,
 each maximal dimensional cone in $\Sigma$ 
 associated with an order ideal $\tau\in J(P)$
 is written as
 \begin{align}
  \label{eq:maxcone}
  \sigma_\tau: = \Cone \delta\left(E \setminus E(\tau)\right) \subset N_\bR,
 \end{align}
 where $E(\tau)$ is defined by (\ref{eq:etau}).
 On the other hand, 
 $\Conv \delta(E)$ is a Gorenstein terminal Fano polytope by
 \cite[Lemma 1.3--1.5]{MR2770546}.
 Namely, for any $\tau \in J(P)$, all the primitive generators of the cone $\sigma_\tau$,
 i.e., the elements in $\delta(E\setminus E(\tau))$,
 lie on an affine hyperplane 
 with integral distance one from the origin, and it holds
 \begin{align}
  \label{eq:terminal}
  \left( \Conv \delta(E\setminus E(\tau) )\right) \cap N =\delta\left(E\setminus E(\tau)\right).
 \end{align}

Suppose $P$ is pure, and
let 
$X_{\widehat{\Sigma}} \rightarrow \mathbb P_{\Delta(P)}$
be a toric crepant $\bQ$-factorialization.
In other words, $\widehat{\Sigma}$ is a maximal simplicial refinement 
of the normal fan $\Sigma$ of $\Delta(P)$ such that $X_{\widehat{\Sigma}}$
denotes the corresponding $\bQ$-factorial toric variety.
Since $\bP_{\Delta(P)}$ has at worst terminal singularities,
the crepant birational morphism
$X_{\widehat{\Sigma}} \rightarrow \mathbb P_{\Delta(P)}$
is a small modification by definition.
Hence it is sufficient to show that $X_{\widehat{\Sigma}}$ is non-singular.

Fix a maximal dimensional cone $\sigma$ in $\widehat{\Sigma}$.
Since (\ref{eq:maxcone}) and (\ref{eq:terminal}),
there exist an order ideal $\tau \subset P$ and 
a subset 
$
 B\subset E\setminus  E(\tau)
$
consisting of $n+1$ elements
such that
$\sigma = \Cone \delta(B) \subset \sigma_\tau$.
As in the example shown in Figure~\ref{fig:small},
the subgraph $(\hat{P}, E\setminus E(\tau))$ of the Hasse diagram of $\hat{P}$
defining $\sigma_\tau$ consists of two connected graphs, and
the subgraph $(\hat{P}, B)$
defining $\sigma$ consists of two connected tree graphs.
In fact, if $(\hat{P},B)$ contains a cycle,
$\sigma$ cannot have maximal dimension.
%because of the linear relation
%among generators $\delta(B)$
%coming from the cycle.
Therefore, we have a unique unoriented path in $(\hat{P},B)$
from any $u\in P$ to $\hat 0$ or $\hat 1$, 
which attains a value $\pm u \in \bZ \delta(B)$ by summing up and mapping by $\delta$.
Hence $\delta(B)$ forms a $\bZ$-basis of $N = \bZ P$.
Since $\sigma$ is arbitrary, it follows that
$X_{\widehat{\Sigma}}$ is non-singular.
\begin{figure}[H]
 \centering
\begin{minipage}{20mm}
 \ \! $E$\\
 \begin{tikzpicture}[scale=0.32, baseline={(0,0.4)}]
  \draw[black] (0,1) -- (0,2) -- (1,1) -- (2,2) -- (2,1) -- (1,2) -- (0,1) -- cycle;
  \draw[black] (1,0) -- (0,1);
  \draw[black] (1,0) -- (1,1);
  \draw[black] (1,0) -- (2,1);
  \draw[black] (1,3) -- (0,2);
  \draw[black] (1,3) -- (1,2);
  \draw[black] (1,3) -- (2,2);
  %\filldraw[white] (0,1) circle (2pt);
  %\draw (0,1) circle (2pt);
  \filldraw[black] (0,1) circle (2.5pt);
  \filldraw[black] (0,2) circle (2.5pt);
  \filldraw[black] (1,1) circle (2.5pt);
  \filldraw[black] (1,2) circle (2.5pt);
  \filldraw[black] (2,1) circle (2.5pt);
  \filldraw[black] (2,2) circle (2.5pt);
  \draw (1,0-0.1875) node[circle] {*};
  \draw (1,3-0.1875) node[circle] {*};
 \end{tikzpicture}
\end{minipage}
\begin{minipage}{20mm}
 \hspace{-6pt}$E\setminus E(\tau)$\\
 \begin{tikzpicture}[scale=0.32, baseline={(0,0.4)}]
  \draw[black, thick] (0,1) -- (0,2) -- (1,3) -- (1,2) -- (0,1) -- cycle;
  \draw[black, thick] (2,1) -- (2,2) -- (1,1) -- (1,0) -- (2,1) -- cycle;
  \draw[black, densely dotted] (0,2) -- (1,1);
  \draw[black, densely dotted] (0,1) -- (1,0);
  \draw[black, densely dotted] (1,2) -- (2,1);
  \draw[black, densely dotted] (1,3) -- (2,2);
  \filldraw[black] (0,1) circle (2.5pt);
  \filldraw[black] (0,2) circle (2.5pt);
  \filldraw[black] (1,1) circle (4pt);
  \filldraw[black] (1,2) circle (2.5pt);
  \filldraw[black] (2,1) circle (4pt);
  \filldraw[black] (2,2) circle (4pt);
  \draw[black] (1,0-0.1875) node[circle] {\large{*}};
  \draw (1,3-0.1875) node[circle] {*};
 \end{tikzpicture}
\end{minipage}
\begin{minipage}{10mm}
 \ \! $B$\\
 \begin{tikzpicture}[scale=0.32, baseline={(0,0.4)}]
  \draw[black, thick] (0,1) -- (0,2) -- (1,3) -- (1,2);
  \draw[black, thick] (2,2) -- (1,1) -- (1,0) -- (2,1);
  \draw[black, densely dotted] (0,1) -- (1,2);
  \draw[black, densely dotted] (2,2) -- (2,1);
  \draw[black, densely dotted] (0,2) -- (1,1);
  \draw[black, densely dotted] (0,1) -- (1,0);
  \draw[black, densely dotted] (1,2) -- (2,1);
  \draw[black, densely dotted] (1,3) -- (2,2);
  \filldraw[black] (0,1) circle (2.5pt);
  \filldraw[black] (0,2) circle (2.5pt);
  \filldraw[black] (1,1) circle (2.5pt);
  \filldraw[black] (1,2) circle (2.5pt);
  \filldraw[black] (2,1) circle (2.5pt);
  \filldraw[black] (2,2) circle (2.5pt);
  \draw[black] (1,0-0.1875) node[circle] {*};
  \draw (1,3-0.1875) node[circle] {*};
 \end{tikzpicture} 
\end{minipage}
 \caption{An example of subgraphs corresponding to $\sigma_\tau$ and $\sigma$}
 \label{fig:small}
\end{figure}
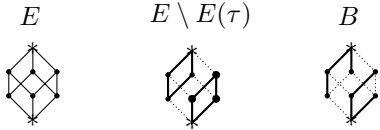
\end{proof}

%]

%]
%[ 3: X0
\section{CICY threefolds in Hibi toric varieties}
%[ A motivating example
\subsection{Examples}
We describe Calabi--Yau threefolds 
obtained as a complete intersection
of general sections of invertible sheaves
in Hibi toric varieties.
We call such Calabi--Yau threefolds 
{\it complete intersection Calabi--Yau (CICY)}
threefolds in Hibi toric varieties.

Let $P$ be a finite poset, 
and $X_0$ a CICY threefold in $\bP_{\Delta(P)}$.
From the adjunction formula,
$\bP_{\Delta(P)}$
has at worst Gorenstein singularities.
In other words, all connected components of $P$ are pure.
If $P$ is a disjoint union of several pure connected posets,
we have a number of Calabi--Yau threefolds
as complete intersections of nef divisors in $\bP_{\Delta(P)}$,
e.g.,
complete intersection Calabi--Yau threefolds in
products of projective spaces.
However, we assume in the sequel 
that $P$ is pure connected for simplicity.
Under this assumption,
$X_0$ is merely a complete intersection of very ample divisors in $\bP_{\Delta(P)}$.
Let $(d_1, \dots, d_r) \subset \bP_{\Delta(P)}$ denote
a complete intersection variety of 
very ample divisors defined by 
general sections
of $\scO(d_1) , \dots, \scO(d_r)$, respectively.
Then $X_0 = (d_1, \dots, d_r) \subset \bP_{\Delta(P)}$ is 
a CICY threefold if and only if
\begin{align}
 \label{eq:cycond}
 \sum_{j=1}^r d_j = h_P \ \text{ and }\  \abs{P}-r=3.
\end{align}
%[ hepersurface example
\begin{example}
 The poset in (\ref{eq:N}) gives an example of 
 hypersurface Calabi--Yau threefolds in Hibi toric varieties.
 Thus $X_0 = (d_1=3) \subset \bP_{\Delta(P)}$ in this case.
\end{example}

%]
%[ ci example
\begin{example}
 \label{ex:cube}
 As an example to illustrate calculations,
 we introduce a finite pure connected poset $P=\scP_1$:
 \begin{align}
  \begin{tikzpicture}[scale=0.4, baseline={(0,0.1)}]
  \draw[black] (0,0) -- (0,1) -- (1,0) -- (2,1) -- (2,0) -- (1,1) -- (0,0) -- cycle;
  \filldraw[black] (0,0) circle (2pt);
  \filldraw[black] (0,1) circle (2pt);
  \filldraw[black] (1,0) circle (2pt);
  \filldraw[black] (1,1) circle (2pt);
  \filldraw[black] (2,0) circle (2pt);
  \filldraw[black] (2,1) circle (2pt);
 \end{tikzpicture}
 \end{align}
 We have $\abs{\mathcal{P}_1}=6$ and $h_{\mathcal{P}_1}=3$, and hence,
 the associated Hibi toric variety $\bP_{\Delta(\mathcal{P}_1)}$
 is a six-dimensional Gorenstein terminal Fano variety with 
 $\omega^\vee \simeq \scO(3)$.
 We have a linear section Calabi--Yau threefold 
 $X_0 = (1^3) \subset \bP_{\Delta(\mathcal{P}_1)}$.

 The first part of $J(\scP_1)$ corresponds to order ideals in the
 left of Figure~\ref{fig:jp1}.
 By continuing while focusing on set inclusions,
 we obtain the lattice $J(\scP_1)$ as in the middle of Figure~\ref{fig:jp1},
 consisting of $\abs{J(\scP_1)}=18$ elements.
 Moreover, we have $c_{J(\scP_1)} = 48$, the number of maximal chains on $J(\scP_1)$,
 by counting as in the
 right in Figure~\ref{fig:jp1}.
 \begin{figure}[H]
 \label{fig:p1}
 \centering
 %[ head of J(P1)
 \begin{minipage}{3cm}
 \centering
 \mbox{
 \begin{tikzpicture}[scale=0.2]
  \draw[black] (0,0) -- (0,1) -- (1,0) -- (2,1) -- (2,0) -- (1,1) -- (0,0) -- cycle;
  \filldraw[black] (0,0) circle (4pt);
  \filldraw[black] (0,1) circle (4pt);
  \filldraw[black] (1,0) circle (4pt);
  \filldraw[black] (1,1) circle (4pt);
  \filldraw[black] (2,0) circle (4pt);
  \filldraw[black] (2,1) circle (4pt);
 \end{tikzpicture}
 }\\
 \mbox{
  \begin{tikzpicture}[scale=0.2]
  \draw[black] (1,0) -- (2,1) -- (2,0) -- (1,1) -- (0,0);
  \filldraw[black] (0,0) circle (4pt);
  \filldraw[black] (1,0) circle (4pt);
  \filldraw[black] (1,1) circle (4pt);
  \filldraw[black] (2,0) circle (4pt);
  \filldraw[black] (2,1) circle (4pt);
 \end{tikzpicture}
 \begin{tikzpicture}[scale=0.2]
  \draw[black] (0,0) -- (0,1) -- (1,0) -- (2,1) -- (2,0);
  \filldraw[black] (0,0) circle (4pt);
  \filldraw[black] (0,1) circle (4pt);
  \filldraw[black] (1,0) circle (4pt);
  \filldraw[black] (2,0) circle (4pt);
  \filldraw[black] (2,1) circle (4pt);
 \end{tikzpicture}
 \begin{tikzpicture}[scale=0.2]
  \draw[black] (2,0) -- (1,1) -- (0,0) -- (0,1) -- (1,0);
  \filldraw[black] (0,0) circle (4pt);
  \filldraw[black] (0,1) circle (4pt);
  \filldraw[black] (1,0) circle (4pt);
  \filldraw[black] (1,1) circle (4pt);
  \filldraw[black] (2,0) circle (4pt);
 \end{tikzpicture}
 }\\
 \mbox{
 \begin{tikzpicture}[scale=0.2]
  \draw[black] (1,0) -- (2,1) -- (2,0);
  \filldraw[black] (0,0) circle (4pt);
  \filldraw[black] (1,0) circle (4pt);
  \filldraw[black] (2,0) circle (4pt);
  \filldraw[black] (2,1) circle (4pt);
 \end{tikzpicture}
 \begin{tikzpicture}[scale=0.2]
  \draw[black] (2,0) -- (1,1) -- (0,0);
  \filldraw[black] (0,0) circle (4pt);
  \filldraw[black] (1,0) circle (4pt);
  \filldraw[black] (1,1) circle (4pt);
  \filldraw[black] (2,0) circle (4pt);
 \end{tikzpicture}
 \begin{tikzpicture}[scale=0.2]
  \draw[black] (0,0) -- (0,1) -- (1,0);
  \filldraw[black] (0,0) circle (4pt);
  \filldraw[black] (0,1) circle (4pt);
  \filldraw[black] (1,0) circle (4pt);
  \filldraw[black] (2,0) circle (4pt);
 \end{tikzpicture}
 }\\
 $\vdots$
\end{minipage}
%]
 %[ J(P1)
\quad
\begin{tikzpicture}[scale=0.4, baseline = {(0.4,1)}]
  \draw[black] (0,1) -- (0,2) -- (1,1) -- (2,2) -- (2,1) -- (1,2) -- (0,1) -- cycle;
  \draw[black] (0,4) -- (0,5) -- (1,4) -- (2,5) -- (2,4) -- (1,5) -- (0,4) -- cycle;
  \draw[black] (1,0) -- (0,1);
  \draw[black] (1,0) -- (1,1);
  \draw[black] (1,0) -- (2,1);
  \draw[black] (0,4) -- (1,3) -- (0,2);
  \draw[black] (1,4) -- (1,3) -- (1,2);
  \draw[black] (2,4) -- (1,3) -- (2,2);
  \draw[black] (1,6) -- (0,5);
  \draw[black] (1,6) -- (1,5);
  \draw[black] (1,6) -- (2,5);
  \draw[black] (0,4) to [out=200,in=90] (-0.5,3) to [out=-90,in=160] (0,2);
  \draw[black] (1,4) to [out=200,in=90] (0.5,3) to [out=-90,in=160] (1,2);
  \draw[black] (2,4) to [out=-20,in=90] (2.5,3) to [out=-90,in=20] (2,2);
  \filldraw[black] (-0.5,3) circle (2pt);
  \filldraw[black] (0,1) circle (2pt);
  \filldraw[black] (0,2) circle (2pt);
  \filldraw[black] (0,4) circle (2pt);
  \filldraw[black] (0,5) circle (2pt);
  \filldraw[black] (0.5,3) circle (2pt);
  \filldraw[black] (1,0) circle (2pt);
  \filldraw[black] (1,1) circle (2pt);
  \filldraw[black] (1,2) circle (2pt);
  \filldraw[black] (1,3) circle (2pt);
  \filldraw[black] (1,4) circle (2pt);
  \filldraw[black] (1,5) circle (2pt);
  \filldraw[black] (1,6) circle (2pt);
  \filldraw[black] (2,1) circle (2pt);
  \filldraw[black] (2,2) circle (2pt);
  \filldraw[black] (2,4) circle (2pt);
  \filldraw[black] (2,5) circle (2pt);
  \filldraw[black] (2.5,3) circle (2pt);
 \end{tikzpicture} \quad \quad
 %]
 %[ counting maximal chains on J(P1)
\begin{tikzpicture}[scale=0.4, baseline = {(0.4,1)}]
  \draw[black] (0,1) -- (0,2) -- (1,1) -- (2,2) -- (2,1) -- (1,2) -- (0,1) -- cycle;
  \draw[black] (0,4) -- (0,5) -- (1,4) -- (2,5) -- (2,4) -- (1,5) -- (0,4) -- cycle;
  \draw[black] (1,0) -- (0,1);
  \draw[black] (1,0) -- (1,1);
  \draw[black] (1,0) -- (2,1);
  \draw[black] (0,4) -- (1,3) -- (0,2);
  \draw[black] (1,4) -- (1,3) -- (1,2);
  \draw[black] (2,4) -- (1,3) -- (2,2);
  \draw[black] (1,6) -- (0,5);
  \draw[black] (1,6) -- (1,5);
  \draw[black] (1,6) -- (2,5);
  \draw[black] (0,4) to [out=200,in=90] (-0.5,3) to [out=-90,in=160] (0,2);
  \draw[black] (1,4) to [out=200,in=90] (0.5,3) to [out=-90,in=160] (1,2);
  \draw[black] (2,4) to [out=-20,in=90] (2.5,3) to [out=-90,in=20] (2,2);
  \filldraw[black] (-0.5,3) circle (2pt);
  \filldraw[black] (0,1) circle (2pt);
  \filldraw[black] (0,2) circle (2pt);
  \filldraw[black] (0,4) circle (2pt);
  \filldraw[black] (0,5) circle (2pt);
  \filldraw[black] (0.5,3) circle (2pt);
  \filldraw[black] (1,0) circle (2pt);
  \filldraw[black] (1,1) circle (2pt);
  \filldraw[black] (1,2) circle (2pt);
  \filldraw[black] (1,3) circle (2pt);
  \filldraw[black] (1,4) circle (2pt);
  \filldraw[black] (1,5) circle (2pt);
  \filldraw[black] (1,6) circle (2pt);
  \filldraw[black] (2,1) circle (2pt);
  \filldraw[black] (2,2) circle (2pt);
  \filldraw[black] (2,4) circle (2pt);
  \filldraw[black] (2,5) circle (2pt);
  \filldraw[black] (2.5,3) circle (2pt);
  \draw (1+0.3,6+0.1) node[circle] {\tiny{$1$}};
  \draw (-0.3,5+0.1) node[circle] {\tiny{$1$}};
  \draw (1+ 0.3,5+0.1) node[circle] {\tiny{$1$}};
  \draw (2+ 0.3,5+0.1) node[circle] {\tiny{$1$}};
  \draw (-0.3,4+0.2) node[circle] {\tiny{$2$}};
  \draw (1+ 0.3,4-0.1) node[circle] {\tiny{$2$}};
  \draw (2+ 0.3,4+0.2) node[circle] {\tiny{$2$}};
  \draw (-0.5-0.3,3) node[circle] {\tiny{$2$}};
  \draw (0.5-0.3,3) node[circle] {\tiny{$2$}};
  \draw (2.5+0.3,3) node[circle] {\tiny{$2$}};
  \draw (1+ 0.5,3) node[circle] {\tiny{$6$}};
  \draw (-0.3,2-0.2) node[circle] {\tiny{$8$}};
  \draw (1+ 0.3,2+0.1) node[circle] {\tiny{$8$}};
  \draw (2+ 0.3,2-0.2) node[circle] {\tiny{$8$}};
  \draw (-0.45,1-0.1) node[circle] {\tiny{$16$}};
  \draw (1+ 0.4,1-0.1) node[circle] {\tiny{$16$}};
  \draw (2+ 0.45,1-0.1) node[circle] {\tiny{$16$}};
  \draw (1+ 0.4,-0.1) node[circle] {\tiny{$48$}};
 \end{tikzpicture}
 %]
 \caption{On the Hasse diagram of $J(\scP_1)$}
 \label{fig:jp1}
 \end{figure}
\end{example}

%]
%]
%[ stringy Hodge numbers
\subsection{Stringy Hodge numbers}
Let $P$ be a pure connected poset and $X_0=(d_1,\dots,d_r) \subset \bP_{\Delta(P)}$.
We have a small resolution $Y\rightarrow X_0$, by taking the strict transform of $X_0$
for a small toric resolution $X_{\widehat{\Sigma}}\rightarrow \bP_{\Delta(P)}$ for example.
In this case,
the stringy Hodge numbers of $X_0$ are nothing but
usual Hodge numbers of $Y$.
From \cite[Proposition 8.6]{MR1463173},
the following combinatorial formulas hold.
\begin{proposition}
\begin{align}
 \label{eq:h11}
 h_{\mathrm{st}}^{1,1}(X_0)=h^{1,1}(Y)=&\abs E- \abs P,\\
 \begin{split}
  \label{eq:h21}
 h_{\mathrm{st}}^{1,2}(X_0)=h^{1,2}(Y)=&
 \sum_{i\in [r]}\left[ \sum_{J \subset [r]}(-1)^{\abs J} 
	l\left((d_i-d_J)\Delta(P) \right)\right] \\ 
        &- \sum_{J \subset [r]}(-1)^{r-\abs J}\left[ \sum_{e\in E}
        l^*(d_J\theta_e) \right]-\abs P,
 \end{split}
\end{align}
where $l(\theta)$ and $l^*(\theta)$
denote the number of lattice points in a face $\theta \subset M_\bR$
and in the interior of $\theta$, respectively;
$[r]=\left\{1, \dots, r\right\}$, 
$d_J=\sum_{j\in J}d_j$ and $\theta_e$ 
is the facet of $\Delta(P)$ corresponding to an edge $e\in E$.
\end{proposition}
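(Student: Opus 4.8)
The plan is to reduce both identities to ordinary Hodge numbers of the small resolution $Y$, to compute $h^{1,1}(Y)$ by a short toric argument, and to obtain $h^{1,2}(Y)$ by specializing the general combinatorial formula for stringy invariants of Calabi--Yau complete intersections in Gorenstein toric Fano varieties. For the reduction, I would take $Y$ to be the strict transform of $X_0$ under a small toric crepant $\bQ$-factorialization $X_{\widehat\Sigma}\to\bP_{\Delta(P)}$; such a map exists and is a small crepant resolution by Proposition~\ref{prop:small}, with $X_{\widehat\Sigma}$ non-singular by the proof of that proposition. By Bertini, for general defining sections $Y$ is a non-singular projective threefold with $\omega_Y\simeq\scO_Y$ and $H^1(Y,\scO_Y)=0$, and $Y\to X_0$ is small and crepant; in particular $X_0$ has at worst canonical Gorenstein singularities (indeed finitely many nodes, located at the intersection of $X_0$ with the singular strata of $\bP_{\Delta(P)}$). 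Since stringy Hodge numbers are invariant under crepant birational maps and reduce to ordinary Hodge numbers on smooth varieties \cite{MR1463173}, $h^{p,q}_{\mathrm{st}}(X_0)=h^{p,q}(Y)$, which is the first equality in each of \eqref{eq:h11} and \eqref{eq:h21}.

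For \eqref{eq:h11} I would argue directly. Since $X_{\widehat\Sigma}\to\bP_{\Delta(P)}$ is crepant, $\widehat\Sigma$ refines the normal fan $\Sigma$ of $\Delta(P)$ without adding new rays --- this is precisely relation \eqref{eq:terminal} --- so the rays of $\widehat\Sigma$ are the vectors $\delta(e)$ for $e\in E$, one for each facet of $\Delta(P)$. Hence the smooth complete toric variety $X_{\widehat\Sigma}$ has Picard number $\abs E-\abs P$. As $Y$ is a complete intersection of dimension $3$ in $X_{\widehat\Sigma}$ cut out by the nef and big pullbacks of the very ample sheaves $\scO(d_j)$, a Lefschetz-type theorem identifies $H^2(Y)$ with $H^2(X_{\widehat\Sigma})$, and so $h^{1,1}(Y)=\abs E-\abs P$.

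For \eqref{eq:h21} I would invoke \cite[Proposition~8.6]{MR1463173}. Because $P$ is pure and connected, $\bP_{\Delta(P)}$ is Gorenstein Fano with $-K\simeq\scO(h_P)$ and reflexive anticanonical polytope $h_P\Delta(P)$; moreover $d_1\Delta(P)+\dots+d_r\Delta(P)=h_P\Delta(P)$ as Minkowski sums, so the $d_j\Delta(P)$ form a nef partition and $X_0=(d_1,\dots,d_r)$ is a Calabi--Yau complete intersection in exactly the sense required. That proposition expresses the stringy $E$-function $E_{\mathrm{st}}(X_0;u,v)$ as an alternating sum over subsets $J\subset[r]$ (encoding the Koszul resolution of the ideal of $X_0$) of lattice-point counts $l$ and interior counts $l^*$ of dilated faces of $\Delta(P)$; under Wagner's dictionary the faces that enter are the facets $\theta_e$ indexed by $e\in E$. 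Since $Y$ is a Calabi--Yau threefold with $\omega_Y\simeq\scO_Y$ and $H^1(\scO_Y)=0$, its whole Hodge diamond is determined by $h^{1,1}$ and $h^{1,2}$; extracting the coefficient of $uv^2$ from $E_{\mathrm{st}}(X_0;u,v)=\sum_{p,q}(-1)^{p+q}h^{p,q}(Y)u^pv^q$ gives $-h^{1,2}(Y)$, and sorting the terms into the three groups appearing in \eqref{eq:h21} --- the Koszul-corrected counts of sections of the $\scO(d_i)$, the codimension-one boundary contributions $\sum_{e\in E}l^*(d_J\theta_e)$, and the rank $\abs P$ of $M$ --- yields the formula.

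The main obstacle is precisely this last bookkeeping. The Batyrev--Borisov formula is normally phrased with contributions organized by pairs consisting of a face and its dual face and by the $S$- and $T$-polynomials of those faces; one has to verify that for a threefold of Picard number one all dual-face contributions of codimension $\ge 2$ either vanish or cancel, so that only the single facet sum $\sum_{e\in E}l^*(d_J\theta_e)$ survives, and one has to reconcile every sign (in particular the $(-1)^{r-\abs J}$ in the boundary sum against the $(-1)^{\abs J}$ in the polynomial sum). A secondary, easier point is to confirm that the reduction in the first paragraph is valid, i.e.\ that $X_0$ really has only canonical (nodal) Gorenstein singularities and that $Y\to X_0$ is crepant, both of which follow from terminality of $\bP_{\Delta(P)}$ together with generality of the sections.
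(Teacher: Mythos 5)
Your proposal follows essentially the same route as the paper, which gives no argument beyond observing that the strict transform $Y$ under the small toric resolution of Proposition~\ref{prop:small} computes the stringy Hodge numbers and then citing \cite[Proposition 8.6]{MR1463173} for the combinatorial formulas. Your additional details (the ray count $\abs{E}-\abs{P}$ for the Picard number, the Lefschetz identification of $H^2$, the nef-partition bookkeeping for the $E$-function) are consistent with that citation and correctly identify where the remaining verification lies.
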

Note that a nonzero contribution in the first term of 
(\ref{eq:h21})
comes only from the range of $d_i-d_J\ge 0$,
and in the second term it comes only from the range $d_J = h_P-1$ or $h_P$.
In particular, if (i) $d_j=1$ for all $j$, and (ii)
$P$ has no ordinal summand of singleton, 
i.e., $P\ne P_1 \oplus u \oplus P_2$ for any $P_1$ and $P_2$, we have
\begin{align}
 \label{eq:h21simplify}
 h_{\mathrm{st}}^{1,2}(X_0) = h^{1,2}(Y)=
 h_P \left( \abs{J(P)} - h_P  \right) -\sum_{e \in E} l^*\left( h_P \theta_e \right)
 -\abs{P}.
\end{align}

\begin{example}
 For the example $P= \scP_1$
 and a complete intersection $X_0=(1^3)$,
 we obtain $h_{\mathrm{st}}^{1,1}(X_0) = 12-6 = 6$ from (\ref{eq:h11}).
 Since the both conditions (i) and (ii) are satisfied, 
 the simplified formula (\ref{eq:h21simplify}) holds in this case.
 By the symmetry $\frak{S}_3\times\frak{S}_3$ of $\hat{\scP}_1$ as a poset
 and the order duality, 
 it suffices to see
 the following two types of facets:
%[ facets
\begin{align}
 \begin{tikzpicture}[scale=0.32, baseline={(0,0.4)}]
  \draw[black] (0,1) -- (0,2) -- (1,1) -- (2,2) -- (2,1) -- (1,2) -- (0,1) -- cycle;
  \draw[black] (1,0) -- (0,1);
  \draw[black] (1,0) -- (1,1);
  \draw[black] (1,0) -- (2,1);
  \draw[black, thick] (1,3) -- (0,2);
  \draw[black] (1,3) -- (1,2);
  \draw[black] (1,3) -- (2,2);
  \filldraw[black] (0,1) circle (2.5pt);
  \filldraw[black] (0,2) circle (4pt);
  \filldraw[black] (1,1) circle (2.5pt);
  \filldraw[black] (1,2) circle (2.5pt);
  \filldraw[black] (2,1) circle (2.5pt);
  \filldraw[black] (2,2) circle (2.5pt);
  \draw (1,0-0.1875) node[circle] {*};
  \draw[black] (1,3-0.1875) node[circle] {\large{*}};
  \draw[rotate around={45:(0.5,2.5)}] (0.5,2.5) ellipse (1.2 and 0.45);
 \end{tikzpicture} \rightarrow
 \begin{tikzpicture}[scale=0.2,baseline={(0,0.2)}]
  \draw[black] (0,1) -- (1,2) -- (2,3) -- (3,2) -- (4,1) -- (2,0) -- (0,1) -- cycle;
  \draw[black] (1,2) -- (2,1) -- (2,0);
  \draw[black] (3,2) -- (2,1) -- (2,0);
  \filldraw[black] (0,1) circle (4pt);
  \filldraw[black] (1,2) circle (4pt);
  \filldraw[black] (2,1) circle (4pt);
  \filldraw[black] (3,2) circle (4pt);
  \filldraw[black] (4,1) circle (4pt);
  \draw (2,0-0.3) node[circle] {*};
  \draw[black] (2,3-0.3) node[circle] {\large{*}};
  \draw[black] (2,3) circle (0.6);
 \end{tikzpicture} \quad  \text{ and } \quad  \ 
 \begin{tikzpicture}[scale=0.32, baseline={(0,0.4)}]
  \draw[black, thick] (0,1) -- (0,2);
  \draw[black] (0,2) -- (1,1) -- (2,2) -- (2,1) -- (1,2) -- (0,1);
  \draw[black] (1,0) -- (0,1);
  \draw[black] (1,0) -- (1,1);
  \draw[black] (1,0) -- (2,1);
  \draw[black] (1,3) -- (0,2);
  \draw[black] (1,3) -- (1,2);
  \draw[black] (1,3) -- (2,2);
  \filldraw[black] (0,1) circle (4pt);
  \filldraw[black] (0,2) circle (4pt);
  \filldraw[black] (1,1) circle (2.5pt);
  \filldraw[black] (1,2) circle (2.5pt);
  \filldraw[black] (2,1) circle (2.5pt);
  \filldraw[black] (2,2) circle (2.5pt);
  \draw (1,0-0.1875) node[circle] {*};
  \draw (1,3-0.1875) node[circle] {*};
  \draw (0,1.5) ellipse (0.4 and 1.1);
 \end{tikzpicture} \rightarrow
 \begin{tikzpicture}[scale=0.2, baseline={(0,0.3)}]
  \draw[black] (2,1) -- (1,0) -- (0,1) -- (2,3) -- (2,1) -- (0,3) -- (1,4) -- (2,3);
  \draw[black] (0,3) to [out=200,in=90] (-0.5,2) to [out=-90,in=160] (0,1);
  \filldraw[black] (-0.5,2) circle (6pt);
  \draw[black] (-0.5, 2) circle (0.6);
  \filldraw[black] (0,1) circle (4pt);
  \filldraw[black] (0,3) circle (4pt);
  \filldraw[black] (2,1) circle (4pt);
  \filldraw[black] (2,3) circle (4pt);
  \draw (1,0-0.3) node[circle] {*};
  \draw (1,4-0.3) node[circle] {*};
\end{tikzpicture}.
\end{align}
%]
 There are six facets for each type, and clearly
 $l^*(3 \theta) = 1$ (resp.\ $0$) 
 for the former (resp.\ the latter) type.
 Therefore $h_{\mathrm{st}}^{1,2}(X_0) = 3(18-3) - 6 - 6 = 33$.
\end{example}

%]
%[ numbers of nodes
\subsection{Numbers of nodes}
Recall that three-dimensional Gorenstein terminal toric singularities 
are at worst nodes  (i.e, ordinary double points), 
since they are presented by three-dimensional cones over a unit triangle or a unit square.
Together with the Bertini-type theorem for toroidal singularities,
the singularities of $X_0$ are also at worst nodes.
We count the number of nodes $\mathrm{dp}(X_0)$ on $X_0$ in the following.

Each node on $X_0$
lies on one singular locus of codimension three
of $\bP_{\Delta(P)}$,
corresponding to
a minimal convex cycle
$C \subset \hat{P}$
with four elements.
There are four types of such minimal convex cycles:
%[ lambda 4
\begin{align}
 \begin{tikzpicture}[scale=0.2, baseline={(0.2,0.1)}]
  \fill[gray!40!white] (0,1) -- (1,2) -- (2,1) -- (1,0) -- (0,1) -- cycle;
  \draw[black] (0,1) -- (1,2) -- (2,1) -- (1,0) -- (0,1) -- cycle;
  \filldraw[black] (0,1) circle (4pt);
  \filldraw[black] (1,2) circle (4pt);
  \filldraw[black] (2,1) circle (4pt);
  \filldraw[black] (1,0) circle (4pt);
 \end{tikzpicture}, \ 
 \begin{tikzpicture}[scale=0.2, baseline={(0.2,0.1)}]
  \fill[gray!40!white] (0,1) -- (1,2) -- (2,1) -- (1,0) -- (0,1) -- cycle;
  \draw[black] (0,1) -- (1,2) -- (2,1) -- (1,0) -- (0,1) -- cycle;
  \filldraw[black] (0,1) circle (4pt);
  \filldraw[black] (2,1) circle (4pt);
  \filldraw[black] (1,0) circle (4pt);
  \draw (1,2-0.3) node[circle] {*};
 \end{tikzpicture}, \ 
 \begin{tikzpicture}[scale=0.2, baseline={(0.2,0.1)}]
  \fill[gray!40!white] (0,1) -- (1,2) -- (2,1) -- (1,0) -- (0,1) -- cycle;
  \draw[black] (0,1) -- (1,2) -- (2,1) -- (1,0) -- (0,1) -- cycle;
  \filldraw[black] (0,1) circle (4pt);
  \filldraw[black] (1,2) circle (4pt);
  \filldraw[black] (2,1) circle (4pt);
  \draw (1,0-0.3) node[circle] {*};
 \end{tikzpicture} \  \text{ or } \ \ 
 \begin{tikzpicture}[scale=0.2, baseline={(0.2,0.1)}]
  \fill[gray!40!white] (0,0) -- (0,2) -- (1,1) -- (0,0) -- cycle;
  \fill[gray!80!white] (2,2) -- (1,1) -- (2,0) -- (2,2) -- cycle;
  \draw[black] (0,0) -- (0,2) -- (2,0) -- (2,2) -- (0,0) -- cycle;
  \filldraw[black] (0,0) circle (4pt);
  \filldraw[black] (0,2) circle (4pt);
  \filldraw[black] (2,0) circle (4pt);
  \filldraw[black] (2,2) circle (4pt);
 \end{tikzpicture}\, .
\end{align}
%]
Let $\Lambda_4(\hat P)$ denote the set of 
such minimal convex cycles consisting of four elements on $\hat P$.
For each $C\in \Lambda_4(\hat P)$,
there is the contraction $\hat P\rightarrow \hat P_C$ such that
all the fibers are singleton except one fiber $C$.
Of course it holds $\abs{P_C} = \abs{P} -3$ for all $C\in \Lambda_4(\hat P)$.
Hence 
$C$ defines a singular locus of codimension three and of
degree $\deg \Delta(P_C) = c_{J(P_C)}$ from (\ref{eq:degree}).
Therefore, the number of nodes $\mathrm{dp}(X_0)$
is calculated by a formula
\begin{align}
 \label{eq:nodes}
 \mathrm{dp}(X_0) = \prod_{j=1}^r d_j \sum_{C\in \Lambda_4(\hat{P})}c_{J(P_C)}.
\end{align}

\begin{example}
 There are six minimal convex cycles on $\hat{\scP_1}$, 
 each of which consists of four elements.
 By symmetry, they are all equivalent to
%[ singularities
\begin{align}
 \label{eq:cycle}
 \begin{tikzpicture}[scale=0.32, baseline={(0,0.4)}]
  \fill[gray!40!white!] (1,0) -- (0,1) -- (0,2) -- (1,1) -- (1,0) -- cycle;
  \draw[black] (0,1) -- (0,2) -- (1,1) -- (2,2) -- (2,1) -- (1,2) -- (0,1) -- cycle;
  \draw[black] (1,0) -- (0,1);
  \draw[black] (1,0) -- (1,1);
  \draw[black] (1,0) -- (2,1);
  \draw[black] (1,3) -- (0,2);
  \draw[black] (1,3) -- (1,2);
  \draw[black] (1,3) -- (2,2);
  \filldraw[black] (0,1) circle (2.5pt);
  \filldraw[black] (0,2) circle (2.5pt);
  \filldraw[black] (1,1) circle (2.5pt);
  \filldraw[black] (1,2) circle (2.5pt);
  \filldraw[black] (2,1) circle (2.5pt);
  \filldraw[black] (2,2) circle (2.5pt);
  \draw (1,0-0.15) node[circle] {*};
  \draw (1,3-0.15) node[circle] {*};
 \end{tikzpicture}
 \rightarrow
 \begin{tikzpicture}[scale=0.2, baseline={(0,0.2)}]
  \draw[black] (1,3) -- (0,2) -- (1,1) -- (2,2) -- (1,3) -- cycle;
  \draw[black] (1,0) -- (1,1);
  \filldraw[black] (0,2) circle (4pt);
  \filldraw[black] (1,1) circle (4pt);
  \filldraw[black] (2,2) circle (4pt);
  \draw (1,3-0.3) node[circle] {*};
  \draw (1,0-0.3) node[circle] {*};
\end{tikzpicture}.
\end{align}
%]
 Since the locus is a quadric threefold in Example~\ref{ex:v2},
 we obtain $\mathrm{dp}(X_0)=6\cdot 2 = 12$. 
\end{example}

%]
%[ smoothability
\subsection{Smoothability}
For smoothability, we follow the argument in the case of hypersurfaces in
toric varieties by \cite{MR2801412}.
Let 
$\left\{ p_1, \dots, p_\mathrm{dp} \right\}$ be the set of nodes on $X_0$,
where $\mathrm{dp}=\mathrm{dp}(X_0)$, and
$f: Y \rightarrow X_0$ be a small resolution.
The exceptional lines $L_i := f^{-1}(p_i) \simeq \bP^1$ for $i=1,\dots, \mathrm{dp}$
form a linear subspace of $H_2(Y,\bC)$.
By \cite[Theorem 2.5]{MR1923221}, the Calabi--Yau threefold $X_0$ is smoothable
by a flat deformation
if and only if
the homology classes $[L_i]\in H_2(Y,\bC)$ satisfy a relation,
\begin{align}
 \sum_{i=1}^{\mathrm{dp}} \alpha_i [L_i] = 0, 
\end{align}
where $\alpha_i \ne 0$ for all $i = 1,\dots, \mathrm{dp}$.
 
Note that one can identify
\begin{align}
 \label{eq:id}
 H_2(Y,\bQ)\simeq \left\{ (\lambda_e)_{e\in E} \,
  \middle| \, 
\sum_{e\in E} \lambda_e \delta(e) = 0
\right\} \subset \bQ^E.
\end{align}
Under this identification,
the homology class $[L_i]$ coincides with
a relation,
\begin{align}
 \rho_C \ \colon \  \delta(e_p) + \delta(e_q) - \delta(e_r) - \delta(e_s) = 0
\end{align}
up to signs,
where the corresponding node $p_i$ lies on a singular locus 
associated with a minimal convex cycle $C\in\Lambda_4(\hat{P})$,
and the cycle $C$ with an orientation
passes through the four edges;
$e_p, e_q$ in the forward direction and  $e_r, e_s$ in the opposite direction.

\begin{proposition}
 \label{prop:smoothing}
 Let $P$ be a pure connected poset,
 $X_0$ a CICY threefold
 of degree $(d_1,\dots, d_r)$
 in the Hibi toric variety $\bP_{\Delta(P)}$.
 If $\prod_{j=1}^r d_j > 1$, $X_0$ is smoothable.
 If $\prod_{j=1}^r d_j = 1$,
 $X_0$ is smoothable if and only if
 for any $C\in \Lambda_4(\hat{P})$ such that $P_C$ is a chain
 the element $\rho_C$ is a linear combination of the
 remaining elements $\rho_{C'}$ 
 with $C' \in \Lambda_4(\hat{P})$, $C\ne C'$.
\end{proposition}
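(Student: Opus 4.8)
The plan is to reduce the smoothability criterion of \cite[Theorem 2.5]{MR1923221} to an elementary linear-dependence statement about the exceptional curve classes $[L_i]$, and then to verify it one node at a time using the grouping of the nodes by minimal convex cycles. The first step uses the remark that a linear subspace of $\bQ^{\mathrm{dp}}$ contains a vector with every coordinate nonzero if and only if it is not contained in any coordinate hyperplane (a $\bQ$-vector space is never a finite union of proper subspaces). Applying this to the space of relations $\sum_{i}\alpha_i[L_i]=0$ inside $\bQ^{\mathrm{dp}}$, and using the identification (\ref{eq:id}), one obtains: $X_0$ is smoothable if and only if for every node $p_i$ the class $[L_i]$ lies in the $\bQ$-span of $\{[L_j]\mid j\neq i\}$. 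It therefore suffices to check this span condition at each node.

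Next I would organize the nodes. By the analysis behind (\ref{eq:nodes}), the set of nodes of $X_0$ is a disjoint union $\bigsqcup_{C\in\Lambda_4(\hat P)}S_C$, where $S_C=X_0\cap Z_C$ is the (finite, for general $X_0$) intersection with the singular locus $Z_C\simeq\bP_{\Delta(P_C)}$ attached to $C$, with $\abs{S_C}=\bigl(\prod_{j=1}^r d_j\bigr)c_{J(P_C)}$, and every $p_i\in S_C$ satisfies $[L_i]=\pm\rho_C$. I would then invoke the classical fact that $c_{J(P_C)}$, the number of maximal chains of the distributive lattice $J(P_C)$, equals the number of linear extensions of $P_C$, hence equals $1$ exactly when $P_C$ is a chain and is $\geq 2$ otherwise. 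Consequently $\abs{S_C}\geq 2$ for every $C\in\Lambda_4(\hat P)$, with the sole exception that $\abs{S_C}=1$ precisely when $\prod_{j=1}^r d_j=1$ and $P_C$ is a chain.

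The verification is then short. If $p_i\in S_C$ with $\abs{S_C}\geq 2$, choosing another node $p_{i'}\in S_C$ gives $[L_{i'}]=\pm[L_i]$, so the span condition holds at $p_i$ automatically. If $\abs{S_C}=1$ — which forces $\prod_{j=1}^r d_j=1$ and $P_C$ a chain — then every other node lies over some $C'\neq C$, and since each $C'\in\Lambda_4(\hat P)$ carries at least one node, the $\bQ$-span of $\{[L_j]\mid j\neq i\}$ equals the span of $\{\rho_{C'}\mid C'\in\Lambda_4(\hat P),\ C'\neq C\}$; as $[L_i]=\pm\rho_C$, the span condition at $p_i$ says exactly that $\rho_C$ is a linear combination of the remaining $\rho_{C'}$. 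Putting the cases together: if $\prod_{j=1}^r d_j>1$ every node is of the first kind, so $X_0$ is smoothable; if $\prod_{j=1}^r d_j=1$, the nodes lying over cycles with $P_C$ not a chain impose nothing, and $X_0$ is smoothable if and only if the stated condition holds at every $C\in\Lambda_4(\hat P)$ with $P_C$ a chain.

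I expect the only substantive points to be the first step — repackaging \cite[Theorem 2.5]{MR1923221} as the per-node span condition — together with the combinatorial identity $c_{J(P_C)}=1\iff P_C$ is a chain; everything else is bookkeeping, the mild care needed being the (harmless) sign ambiguity in $[L_i]=\pm\rho_C$ and the point that when $\abs{S_C}=1$ the surviving classes still span all of the remaining $\rho_{C'}$.
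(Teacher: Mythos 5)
Your proposal is correct and follows essentially the same route as the paper, which derives the proposition from Namikawa's criterion, the identification $[L_i]=\pm\rho_C$ under (\ref{eq:id}), and the per-stratum node count $\prod_j d_j\cdot c_{J(P_C)}$ from (\ref{eq:nodes}); your write-up merely makes explicit the standard reduction (a subspace of $\bQ^{\mathrm{dp}}$ avoids all coordinate hyperplanes iff it lies in none of them) and the observation that $c_{J(P_C)}=1$ exactly when $P_C$ is a chain. No gaps.
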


\begin{example}
 For the example $P=\scP_1$,
 $X_0=(1^3)$ is smoothable since $P_C$ is not a chain for all $C\in \Lambda_4(\hat{\scP}_1)$
 as we see in (\ref{eq:cycle}),
 although $\prod_{j=1}^3 d_j = 1$.
\end{example}
\begin{example}
 There are cases that $X_0$ is not smoothable. 
 For example, in (\ref{eq:unsm}) we present the two cycles on the depicted $\hat{P}$ 
 satisfying the condition that $P_C$ is a chain,
%[ cycle eq
\begin{align}
 \label{eq:unsm}
 \mbox{\begin{tikzpicture}[scale=0.2,baseline={(0.4,0.3)}]
  \fill[gray!40!white] (0,2) -- (3,1) -- (2,2) -- (1,3) -- (0,2) -- cycle;
  \draw[black] (0,2) -- (1,1) -- (2,0) -- (3,1) 
  -- (4,2) -- (3,3) -- (2,4) -- (1,3) -- (0,2) -- cycle;
  \draw[black] (1,1) -- (2,2) -- (3,3);
  \draw[black] (1,3) -- (2,2) -- (3,1);
  \draw[black] (0,2) -- (3,1);
  \filldraw[black] (0,2) circle (4pt);
  \filldraw[black] (1,1) circle (4pt);
  \filldraw[black] (1,3) circle (4pt);
  \filldraw[black] (2,2) circle (4pt);
  \filldraw[black] (3,1) circle (4pt);
  \filldraw[black] (3,3) circle (4pt);
  \filldraw[black] (4,2) circle (4pt);
  \draw (2,0-0.3) node[circle] {*};
  \draw (2,4-0.3) node[circle] {*};
\end{tikzpicture}}=
 \mbox{\begin{tikzpicture}[scale=0.2,baseline={(0.4,0.3)}]
  \fill[gray!40!white] (0,2) -- (1,1) -- (2,2) -- (1,3) -- (0,2) -- cycle;
  \draw[black] (0,2) -- (1,1) -- (2,0) -- (3,1) 
  -- (4,2) -- (3,3) -- (2,4) -- (1,3) -- (0,2) -- cycle;
  \draw[black] (1,1) -- (2,2) -- (3,3);
  \draw[black] (1,3) -- (2,2) -- (3,1);
  \draw[black] (0,2) -- (3,1);
  \filldraw[black] (0,2) circle (4pt);
  \filldraw[black] (1,1) circle (4pt);
  \filldraw[black] (1,3) circle (4pt);
  \filldraw[black] (2,2) circle (4pt);
  \filldraw[black] (3,1) circle (4pt);
  \filldraw[black] (3,3) circle (4pt);
  \filldraw[black] (4,2) circle (4pt);
  \draw (2,0-0.3) node[circle] {*};
  \draw (2,4-0.3) node[circle] {*};
\end{tikzpicture}}-
\mbox{\begin{tikzpicture}[scale=0.2,baseline={(0.4,0.3)}]
  \fill[gray!40!white] (0,2) -- (1,1) -- (1.5,1.5) -- (0,2) -- cycle;
  \fill[gray!80!white] (2,2) -- (3,1) -- (1.5,1.5) -- (2,2) -- cycle;
  \draw[black] (0,2) -- (1,1) -- (2,0) -- (3,1) 
  -- (4,2) -- (3,3) -- (2,4) -- (1,3) -- (0,2) -- cycle;
  \draw[black] (1,1) -- (2,2) -- (3,3);
  \draw[black] (1,3) -- (2,2) -- (3,1);
  \draw[black] (0,2) -- (3,1);
  \filldraw[black] (0,2) circle (4pt);
  \filldraw[black] (1,1) circle (4pt);
  \filldraw[black] (1,3) circle (4pt);
  \filldraw[black] (2,2) circle (4pt);
  \filldraw[black] (3,1) circle (4pt);
  \filldraw[black] (3,3) circle (4pt);
  \filldraw[black] (4,2) circle (4pt);
  \draw (2,0-0.3) node[circle] {*};
  \draw (2,4-0.3) node[circle] {*};
\end{tikzpicture}} \ \text{ and } \ \ 
\mbox{\begin{tikzpicture}[scale=0.2, baseline={(0.4,0.3)}]
  \fill[gray!40!white] (2,2) -- (3,1) -- (4,2) -- (3,3) -- (2,2) -- cycle;
  \draw[black] (0,2) -- (1,1) -- (2,0) -- (3,1) 
  -- (4,2) -- (3,3) -- (2,4) -- (1,3) -- (0,2) -- cycle;
  \draw[black] (1,1) -- (2,2) -- (3,3);
  \draw[black] (1,3) -- (2,2) -- (3,1);
  \draw[black] (0,2) -- (3,1);
  \filldraw[black] (0,2) circle (4pt);
  \filldraw[black] (1,1) circle (4pt);
  \filldraw[black] (1,3) circle (4pt);
  \filldraw[black] (2,2) circle (4pt);
  \filldraw[black] (3,1) circle (4pt);
  \filldraw[black] (3,3) circle (4pt);
  \filldraw[black] (4,2) circle (4pt);
  \draw (2,0-0.3) node[circle] {*};
  \draw (2,4-0.3) node[circle] {*};
\end{tikzpicture}} \ .
\end{align}
%]
 The former cycle is a linear combination of remaining cycles
 as expressed by abuse of notation.
 However, the latter cycle is linearly independent to other cycles.
 Therefore, $X_0=(1^4)$ is not smoothable by Proposition~\ref{prop:smoothing}.
\end{example}

%]

%]
%[ 4: X
\section{Smoothing of CICY threefolds in Hibi toric varieties}
%[ simply-connectedness
\subsection{Simply-connectedness}
\begin{proposition}[Corollary of Proposition~\ref{prop:small}]
 \label{prop:sc}
 Let $P$ be a pure poset, $X_0$ a CICY threefold in $\bP_{\Delta(P)}$,
 and $Y\rightarrow X_0$ a small resolution.
 Then $Y$ is simply-connected.
 If a smoothing $X \leadsto X_0$ 
 exists, $X$ is also simply-connected.
\end{proposition}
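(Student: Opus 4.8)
The plan is to show that $Y$, $X$, and $X_0$ all have the same fundamental group as the smooth locus $U := X_0 \setminus \Sing X_0$, and then to compute $\pi_1(U)$ by a Lefschetz argument that reduces it to the (trivial) fundamental group of the ambient toric variety. First I would handle the three reductions. The singularities of $X_0$ are nodes, so each singular point has a neighbourhood which is a cone over the link $S^2 \times S^3$; since this link is simply connected and the cone is contractible, van Kampen gives $\pi_1(X_0) \cong \pi_1(U)$. For the small resolution $f \colon Y \to X_0$, the exceptional locus $f^{-1}(\Sing X_0)$ has complex codimension $\ge 2$, hence real codimension $\ge 4$, in the smooth variety $Y$, so excising it does not change $\pi_1$ and $\pi_1(Y) \cong \pi_1(Y \setminus f^{-1}(\Sing X_0)) = \pi_1(U)$. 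For a smoothing $X \leadsto X_0$, the standard local model of the conifold smoothing replaces each node by its Milnor fibre, which retracts onto a $3$-sphere, glued to $U$ along the link $S^2 \times S^3$; as both the Milnor fibre and the gluing region are simply connected, van Kampen again yields $\pi_1(X) \cong \pi_1(U)$. Thus it suffices to prove $\pi_1(X_0) = 1$.

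Next I would apply the Lefschetz hyperplane theorem. By Proposition~\ref{prop:small} the Hibi toric variety $\bP_{\Delta(P)}$ admits a small crepant resolution by a smooth complete toric variety $X_{\widehat\Sigma}$; since such a variety contains a maximal cone whose primitive ray generators form a $\bZ$-basis of the lattice, $\pi_1(X_{\widehat\Sigma}) = 1$, and as a proper birational morphism onto a normal variety is surjective on $\pi_1$, we obtain $\pi_1(\bP_{\Delta(P)}) = 1$. Now $X_0$ is a general complete intersection of the very ample sheaves $\scO(d_1), \dots, \scO(d_r)$ in $\bP_{\Delta(P)}$, i.e. an iterated general hyperplane section in suitable projective re-embeddings, with $\dim X_0 = 3$. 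Running the Lefschetz theorem for general hyperplane sections of (possibly singular) projective varieties at each of the $r$ stages — all of which leave a section of dimension $\ge 3 \ge 2$, so each inclusion is an isomorphism on $\pi_1$ rather than merely a surjection — gives $\pi_1(X_0) \cong \pi_1(\bP_{\Delta(P)}) = 1$, and hence $Y$ and $X$ are simply connected. (When $P$ is disconnected, $\bP_{\Delta(P)}$ is a product of Hibi toric varieties and the $\scO(d_j)$ are globally generated and big, so the same conclusion follows from the Lefschetz theorem for the morphism they define.)

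The main obstacle is the Lefschetz step: both $X_0$ and the ambient $\bP_{\Delta(P)}$ are in general singular, so one cannot invoke the classical smooth Lefschetz theorem and must use its version for general hyperplane sections of arbitrary quasi-projective varieties (Goresky--MacPherson / Hamm--L\^e), while also checking that the dimension $3$ of $X_0$ is large enough to keep the map on $\pi_1$ an isomorphism throughout the iterated construction — which is precisely why the threefold case is comfortable here. A variant that stays entirely inside the toric world would instead realize $Y$ as the strict transform of $X_0$ in $X_{\widehat\Sigma}$, cut out by general sections of the semiample and big line bundles $\pi^*\scO(d_j)$, and apply the Lefschetz theorem relative to the birational morphism $\pi \colon X_{\widehat\Sigma} \to \bP_{\Delta(P)}$, whose fibres have dimension $\le 1$, concluding $\pi_1(Y) \cong \pi_1(X_{\widehat\Sigma}) = 1$ directly.
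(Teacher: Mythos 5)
Your three reductions to the smooth locus $U:=X_0\setminus \Sing X_0$ are correct (and they make explicit the assertion, used but not proved in the paper, that a conifold transition preserves fundamental groups), and $\pi_1(\bP_{\Delta(P)})=1$ is fine. The gap is the Lefschetz step. There is no ``Lefschetz theorem for general hyperplane sections of arbitrary (possibly singular) projective varieties'' that gives an \emph{isomorphism} on $\pi_1$ as soon as the section has dimension $\ge 2$: the Goresky--MacPherson/Hamm--L\^e bounds for a singular ambient space $Z$ depend on the local topology of its singularities (rectified homotopical depth), and the full connectivity $\pi_i(Z,Z\cap H)=0$ for $i<\dim Z$ is guaranteed for local complete intersections, not in general. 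Concretely, let $Z$ be the projective cone over a projectively normally embedded Enriques surface $S$: this is a normal, Cohen--Macaulay threefold with rational singularities and $\pi_1(Z)=1$, yet its generic hyperplane section is a copy of $S$ with $\pi_1(S)=\bZ/2$, so $\pi_1(Z\cap H)\to\pi_1(Z)$ fails to be injective. Hibi toric varieties are not local complete intersections in general (their singular strata include cones over products of several Hibi varieties), so you cannot invoke the singular Lefschetz theorem off the shelf, and the same problem recurs for each intermediate complete intersection $(d_1,\dots,d_j)$ in your iteration. Without verifying a connectivity condition on the links of all strata, the claim $\pi_1(X_0)\cong\pi_1(\bP_{\Delta(P)})$ is unjustified.

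This is precisely the difficulty the paper's proof is designed to avoid: it applies the Lefschetz theorem for \emph{nonsingular} quasi-projective varieties to $X_{\Sigma^{(1)}}\cap X_0$, where $X_{\Sigma^{(1)}}$ is the smooth open toric subvariety given by the orbits of codimension $\le 1$ (which coincides with the corresponding open subset of the small resolution $X_{\widehat\Sigma}$), and then passes between $X_{\Sigma^{(1)}}\cap X_0$ and $Y$, and between $X_{\Sigma^{(1)}}$ and $X_{\widehat\Sigma}$, by excising closed subsets of real codimension $\ge 4$. Your closing ``variant'' --- realizing $Y$ inside the smooth $X_{\widehat\Sigma}$ as a complete intersection of sections of the semiample and big bundles $\pi^*\scO(d_j)$ and applying the Goresky--MacPherson theorem for the morphism to projective space, with the connectivity defect controlled by the fibre dimensions of the small resolution --- is essentially the correct repair and is the route closest to the paper's; it should be promoted from an aside to the main argument, with the fibre-dimension estimate actually checked.
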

\begin{proof}
 By Proposition~\ref{prop:small},
 we have a small resolution $X_{\widehat{\Sigma}} \rightarrow X_\Sigma=\bP_{\Delta(P)}$.
 Since $X_{\widehat{\Sigma}}$ is a compact toric variety,
 it is simply-connected (see for example \cite[Theorem 9.1]{MR495499}).
 Let $\Sigma^{(1)}$ denote the subfan 
 of $\Sigma$ consisting of cones of dimension less than or equal to one.
 Note $\widehat{\Sigma}^{(1)} = \Sigma^{(1)}$.
 The quasi-projective toric variety $X_{\Sigma^{(1)}}$
 is simply-connected as well.
 In fact, the difference between 
 $X_{\widehat{\Sigma}}$ and $X_{\Sigma^{(1)}}$
 in subsets of real codimension four
 does not effect fundamental groups.
 By the Lefschetz theorem for non-singular quasi-projective manifolds 
 \cite{MR932724,MR820315}, 
 a complete intersection
 $X_{\Sigma^{(1)}}\cap X_0 
 = X_{\widehat{\Sigma}^{(1)}}\cap Y$ is also simply-connected.
 Similarly as above,
 the difference between $X_{\widehat{\Sigma}^{(1)}}\cap Y$ and $Y$
 does not effect fundamental groups.
 Therefore $Y$ is also simply-connected.
 The latter statement follows from the fact that 
 a conifold transition does not change fundamental groups.
\end{proof}

 Let $X$ be a smoothing of a CICY threefold in Hibi toric variety.
 We do not know whether homology groups of $X$ can have torsion or not.
 Suppose that $X$ has torsion-free homology and $h^{1,1}(X)=1$.
 In this case, by Wall's theorem \cite[Theorem 5]{MR0215313}, 
 the diffeomorphism class
 of $X$ is determined only by the three topological invariants,
 \begin{align}
  \label{eq:three}
  \deg X, \ c_2(X)\cdot H, \  \text{ and } \ \chi(X), 
 \end{align} 
 where 
 $H$ is the hyperplane class,
 $c_2(X)$ is the second Chern class
 and $\chi(X)=2\left(h^{1,1}(X)-h^{2,1}(X)\right)$ is the topological Euler number
 of $X$. 
 We summarize the calculation of these topological invariants
 in the next subsection.

%]
%[ topological invariants
\subsection{Topological invariants}
\label{subsec:top}
%[ Hodge
For a conifold transition 
$X \leadsto X_0 \leftarrow Y$,
Hodge numbers satisfy 
\begin{align}
 \label{eq:h11x}
 h^{1,1}(X) &= h^{1,1}(Y) - \mathrm{rk},\\
 \label{eq:h21x}
 h^{1,2}(X) &= h^{1,2}(Y) + \mathrm{dp} - \mathrm{rk},
\end{align}
where 
$\mathrm{rk}=\mathrm{rk}(X_0)$
is the dimension of
linear subspace of
$H_2(Y,\bC)$
spanned by classes of
exceptional lines $[L_i]$ for $i=1,\dots \mathrm{dp}$, i.e.,
\begin{align}
 \mathrm{rk}(X_0)= \rank\left( \rho_C \, \middle| \, C\in \Lambda_4(\hat{P})  \right).
\end{align}
and 
$\mathrm{dp}=\mathrm{dp}(X_0)$
is the number of 
nodes on $X_0$, which we compute by (\ref{eq:nodes}).
In particular, from
$h^{1,1}(Y)=\abs{E}-\abs{P} = b_1(\hat{P})+1$,
we have  $h^{1,1}(X)=1$ if and only if
all minimal cycles
on $\hat{P}$ are generated by cycles in 
$\Lambda_4(\hat{P})$. 

%]
%[ degree
Assume $h^{1,1}(X)=1$. 
From (\ref{eq:degree}) and the invariance by a flat deformation, it holds
\begin{align}
 \label{eq:deg}
 \deg X = \deg X_0 =
 c_{J(P)} \prod_{j=1}^r d_j.
\end{align}
Since also the invariance
$\chi(X,\scO_X(1)) =\chi(X_0,\scO_{X_0}(1))$ and a standard 
cohomology calculation 
for complete intersection varieties in $\bP_{\Delta(P)}$,
we obtain
\begin{align}
 \chi(X,\scO_{X}(1)) & = \dim H^0(X_0,\scO_{X_0}(1)) \\
 & = \dim H^0(\bP_{\Delta(P)},\scO_{\bP_{\Delta(P)}}(1)) - r_1\\
 & = \abs{J(P)}-r_1,
\end{align}
where $r_1= \# \left\{ j\mid d_j=1 \right\}$.
Therefore the Hirzebruch--Riemann--Roch theorem gives
\begin{align}
 c_2(X)\cdot H & = 12 \chi(X,\scO_{X}(1)) -2 \deg X\\
 \label{eq:c2}
 & = 12 \left( \abs{J(P)} -r_1 \right) -2c_{J(P)} \prod_{j=1}^r d_j.
\end{align}

%]
%[ example
\begin{example}
 For $P= \scP_1$, $h^{1,1}(X)=1$ holds since all minimal cycles are in
 $\Lambda_4(\hat{\scP_1})$.
 Hence $\mathrm{rk}(X_0) = 5$ by (\ref{eq:h11x}).
 From $h^{1,2}(Y)=33$ and $\mathrm{dp}(X_0)=12$,
 it holds $h^{1,2}(X)=40$ and $\chi(X) = 2(h^{1,1}(X)-h^{1,2}(X))=-78$
 by (\ref{eq:h21x}).
 We also obtain $\deg(X)=48$ and $c_2(X)\cdot H = 12(18-3)-2\cdot 48 = 84$ 
 by (\ref{eq:deg}) and (\ref{eq:c2}), respectively.
\end{example}

%]

%]
%[ examples
\subsection{Examples}
\label{ss:new}
Let $P$ be a pure poset and $X_0$ a CICY threefold in $\bP_{\Delta(P)}$.
We write $X=X_P$ for a smoothing $X\leadsto X_0$ if it exists.
In spite of the large number of smoothable
CICY threefolds in Hibi toric varieties,
there are few examples of $X_P$ with $h^{1,1}(X_P)=1$.
From \cite[Proposition 3.1]{MR3688804},
we have twelve such threefolds
as complete intersections in minuscule Schubert varieties
up to deformation equivalence;
five in projective spaces,
five in other Grassmannians $G(k,n)$,
one in an orthogonal Grassmannian $OG(5,10)$,
and one in a singular Schubert variety of the Cayley plane $\bO \bP^2$.
The latter two threefolds are also regarded as
complete intersections of some homogeneous vector bundles
on Grassmannians, i.e., No.\,4 and No.\,7 in \cite[Table 1]{2016arXiv160707821I}, respectively.
Apart from these twelve examples,
we introduce six more examples in Table~\ref{tab:CY3}. The topological invariants
are computed by formulas in the previous subsection.
%[ table
\begin{table}[H]
 \caption{Examples of $X$ of Picard number one.}
 \centering 
 \vspace{4pt}
 {\begin{tabular}{@{}c|cccccc@{}}
 \toprule
 posets $\scP_i$ \rule[0mm]{0mm}{4mm} & $\scP_1$ & $\scP_2$ & $\scP_3$ & $\scP_4$ & $\scP_5$& $\scP_6$\\
 Hasse diagrams \rule[-4mm]{0mm}{10mm} &
 \begin{tikzpicture}[scale=0.4, baseline={(0,0.1)}]
  \draw[black] (0,0) -- (0,1) -- (1,0) -- (2,1) -- (2,0) -- (1,1) -- (0,0) -- cycle;
  \filldraw[black] (0,0) circle (2pt);
  \filldraw[black] (0,1) circle (2pt);
  \filldraw[black] (1,0) circle (2pt);
  \filldraw[black] (1,1) circle (2pt);
  \filldraw[black] (2,0) circle (2pt);
  \filldraw[black] (2,1) circle (2pt);
 \end{tikzpicture}
 &
 \begin{tikzpicture}[scale=0.2, baseline={(0,0.2)}]
  \draw[black] (0,3) -- (1,2) -- (0,1) -- (1,0) 
  -- (2,1) -- (1,2) -- (2,3) -- (3,2) -- (2,1) -- (3,0);
  \filldraw[black] (0,1) circle (4pt);
  \filldraw[black] (0,3) circle (4pt);
  \filldraw[black] (1,0) circle (4pt);
  \filldraw[black] (1,2) circle (4pt);
  \filldraw[black] (2,1) circle (4pt);
  \filldraw[black] (2,3) circle (4pt);
  \filldraw[black] (3,0) circle (4pt);
  \filldraw[black] (3,2) circle (4pt);
 \end{tikzpicture}
 &
 \begin{tikzpicture}[scale=0.2, baseline={(0,0.1)}]
  \draw[black] (0,2) -- (1,1) -- (2,2) -- (3,1) -- (4,2);
  \draw[black] (0,0) -- (1,1) -- (2,0) -- (3,1);
  \filldraw[black] (0,0) circle (4pt);
  \filldraw[black] (0,2) circle (4pt);
  \filldraw[black] (1,1) circle (4pt);
  \filldraw[black] (2,0) circle (4pt);
  \filldraw[black] (2,2) circle (4pt);
  \filldraw[black] (3,1) circle (4pt);
  \filldraw[black] (4,2) circle (4pt);
 \end{tikzpicture}
 &
 \begin{tikzpicture}[scale=0.4, baseline={(0,0.1)}]
  \draw[black] (0,0) -- (0,1) -- (1,0) -- (1,1) -- (2,0) -- (2,1);
  \filldraw[black] (0,0) circle (2pt);
  \filldraw[black] (0,1) circle (2pt);
  \filldraw[black] (1,0) circle (2pt);
  \filldraw[black] (1,1) circle (2pt);
  \filldraw[black] (2,0) circle (2pt);
  \filldraw[black] (2,1) circle (2pt);
 \end{tikzpicture}
 &
 \begin{tikzpicture}[scale=0.2, baseline={(0,0.08)}]
  \draw[black] (0,2) -- (1,0) -- (2,2) -- (3,0) -- (4,2);
  \filldraw[black] (0,2) circle (4pt);
  \filldraw[black] (1,0) circle (4pt);
  \filldraw[black] (2,2) circle (4pt);
  \filldraw[black] (3,0) circle (4pt);
  \filldraw[black] (4,2) circle (4pt);
 \end{tikzpicture}
 &
 \begin{tikzpicture}[scale=0.266, baseline={(0,0.04)}]
  \draw[black] (0,0) -- (0,1) -- (1,0) -- (1,1);
  \filldraw[black] (0,0) circle (3pt);
  \filldraw[black] (0,1) circle (3pt);
  \filldraw[black] (1,0) circle (3pt);
  \filldraw[black] (1,1) circle (3pt);
 \end{tikzpicture}\,$\oplus$\!
 \begin{tikzpicture}[scale=0.266, baseline={(0,0.04)}]
  \draw[black] (0,0) -- (0,1) -- (1,0) -- (1,1);
  \filldraw[black] (0,0) circle (3pt);
  \filldraw[black] (0,1) circle (3pt);
  \filldraw[black] (1,0) circle (3pt);
  \filldraw[black] (1,1) circle (3pt);
 \end{tikzpicture}\\
 degrees \rule[-2mm]{0mm}{0mm}& $(1^3)$ & $(1^5)$ & $(1^4)$ & $(1^3)$ & $(1,2)$ & $(1^5)$\\
 \hline
 $\deg X_{\scP_i}$\rule[-2mm]{0mm}{6mm}&$48$& $29$ &$42$ &$61$ &$32$ & $25$\\
 $c_2(X_{\scP_i})\cdot H$\rule[-2mm]{0mm}{6mm}&$84$&$74$&$84$&$94$ & $80$ &$70$\\
 $\chi(X_{\scP_i})$\rule[-2mm]{0mm}{6mm}&$-78$&$-100$&$-96$ & $-86$& $-116$ &$-100$\\
 \bottomrule
\end{tabular}}
\label{tab:CY3}
\end{table}

%]
Note that $X_{\scP_4}$, $X_{\scP_5}$ and $X_{\scP_6}$
are deformation equivalent to 
complete intersections of some homogeneous vector bundles on Grassmannians, i.e.,
No.\,23, No.\,20 and No.\,5 in \cite[Table 1]{2016arXiv160707821I}, respectively
(from a private communication with Daisuke Inoue and Atsushi Ito for $X_{\scP_4}$).
Furthermore, $X_{\scP_5}$ and $X_{\scP_6}$ are also 
regarded as
a complete intersection $(1^2,2)$ in a Lagrangian Grassmannian $LG(3,6)$,
and a complete intersection of two Grassmannians $G(2,5)$ in $\bP^9$, respectively.

%]
%[ CY operators
\begin{remark}
 \label{rem:pf}
Let us discuss mirror symmetry for examples in Table~\ref{tab:CY3}.
For each conifold transition $X\leadsto X_0 \leftarrow Y$,
we expect another conifold transition $Y^* \leadsto Y_0^* \leftarrow X^*$
in the mirror side, if $X$ and $Y$ have torsion-free homology.
We have a Batyrev--Borisov mirror $Y^*$ for $Y \subset \bP_{\widehat{\Sigma}}$,
and the degeneration $Y^* \leadsto Y_0^*$
corresponding to the small resolution $Y\rightarrow X_0$ 
based on the argument by
\cite{MR2112571}.
However, we do not know whether $Y_0^*$ has the same number of nodes as $X_0$
and admits a small resolution $X^* \rightarrow Y_0^*$ or not.
In spite of that, 
periods and the Picard--Fuchs operator vanishing the periods
for the conjectural mirror family
are computable in advance.
The resulting operators in the case of
$\scP_2$, $\scP_3$, $\scP_4$, $\scP_5$ and $\scP_6$
coincide with already known operators,
\#195, \#28, \#124, \#42, and \#101 in \cite{CYequations,2005math......7430A}, respectively.

The operator for $\scP_1$ seems unknown, thus we write it here.
A formula for the fundamental period $\omega_0(z) = \sum_{m=0}^\infty A_m z^m$
is given in \cite[Eq.\,(5,2)]{MR3688804}, where
\begin{align}
 A_m= \sum_{s,t,u,v,w}
 \!\!\!\begin{pmatrix}s\\u\end{pmatrix}
 \!\!\begin{pmatrix}v\\s\end{pmatrix}
 \!\!\begin{pmatrix}t\\s\end{pmatrix}
 \!\!\begin{pmatrix}t\\v\end{pmatrix}
 \!\!\begin{pmatrix}w\\t\end{pmatrix}
 \!\!\begin{pmatrix}m\\t\end{pmatrix}
 \!\!\begin{pmatrix}m\\w\end{pmatrix}
 \!\!\begin{pmatrix}v-t+w\\u-s+v\end{pmatrix}
 \!\!\begin{pmatrix}m\\v-t+w\end{pmatrix}
\end{align}
is read from the (slightly modified) dual graph of $\hat{\scP}_1$,
by associating binomial coefficients with oriented edges and
linear relations with pairs of dashed edges:
\begin{align}
 %[
 \begin{tikzpicture}[scale=1, baseline={(0,1.4)}]
 \draw[gray, densely dotted] (0,1) -- (0,2) -- (1,1) -- (2,2) -- (2,1) -- (1,2) -- (0,1) -- cycle;
 \draw[gray, densely dotted] (0,1) -- (1,0.15);
 \draw[gray, densely dotted] (1,1) -- (1,0.15);
 \draw[gray, densely dotted] (2,1) -- (1,0.15);
 \draw[gray, densely dotted] (1,2) -- (1,2.85);
 \draw[gray, densely dotted] (2,2) -- (1,2.85);
 \draw[gray, densely dotted] (0,2) -- (1,2.85);
 \filldraw[gray] (0,1) circle (0.8pt);
 \filldraw[gray] (0,2) circle (0.8pt);
 \filldraw[gray] (1,1) circle (0.8pt);
 \filldraw[gray] (2,2) circle (0.8pt);
 \filldraw[gray] (2,1) circle (0.8pt);
 \filldraw[gray] (1,2) circle (0.8pt);
 \draw[gray] (1,0.15 - 0.06) node[circle] {*}; 
 \draw[gray] (1,2.85 - 0.06) node[circle] {*}; 
 \draw (0.6,2.05) node[circle] {\small{$s$}}; 
 \draw (1.4,2.05) node[circle] {\small{$t$}}; 
 \draw (0.2,1.5) node[circle] {\small{$u$}}; 
 \draw (1,1.5) node[circle] {\small{$v$}}; 
 \draw (1.8,1.5) node[circle] {\small{$w$}}; 
 \draw (2.6,1.5) node[circle] {\small{$m$}}; 
 \draw (0.6,0.95) node[circle] {\small{$a$}};
 \draw (1.4,0.95) node[circle] {\small{$b$}}; 
 \draw[->] (0.3,1.65)--(0.55,1.9);
 \draw[->] (0.65,1.9)--(0.9,1.65);
 \draw[->] (1.1,1.65)--(1.35,1.9);
 \draw[->] (1.45,1.9)--(1.7,1.65);
 \draw[dashed] (0.3,1.35)--(0.55,1.1);
 \draw[dashed] (0.65,1.1)--(0.9,1.35);
 \draw[dashed] (1.1,1.35)--(1.35,1.1);
 \draw[dashed] (1.45,1.1)--(1.7,1.35);
 \draw[->] (1.95,1.5)--(2.4,1.5);
 \draw[->] (0.7,2.15) to [out=30,in=150] (1.3,2.15);
 \draw[->] (0.7,0.8) to [out=-30,in=-150] (1.3,0.8);
 \draw[->] (1.5,2.15) to [out=50,in=120] (2.45,1.65);
 \draw[->] (1.5,0.8) to [out=-50,in=-120] (2.45,1.35);
 \draw[gray!60!white] (-0.6,1.5) node[circle] {\small{$0$}}; 
 \draw[gray!60!white, ->] (-0.4,1.5) -- (0.05,1.5);
 \draw[gray!60!white, ->] (-0.55, 1.65) to [out=60,in=130] (0.5,2.15);
 \draw[gray!60!white, ->] (-0.55,1.35) to [out=-60,in=-130] (0.5,0.8);
\end{tikzpicture}
\qquad
%]
\begin{cases}
 a = u-s+v,\\
 b = v-t+w.
\end{cases}
\end{align}
With the aid of numerical method, we obtain the following Picard--Fuchs operator
for a conjectural mirror family for $X_{\scP_1}$,
\begin{align}
 \label{eq:op}
\begin{split}
\scD =
\theta^4 &- 2z (33 \theta^4 + 58 \theta^3 + 48 \theta^2+ 19 \theta + 3)\\ 
&+ 4z^2 (174 \theta^4+ 448 \theta^3+ 527 \theta^2+ 314 \theta +75)\\
&- 8z^3 ( 332 \theta^4+ 1096 \theta^3+ 1507 \theta^2+ 953 \theta+228)\\
&+ 96 z^4(\theta + 1)^2 (6 \theta + 5) (6 \theta+7),
\end{split}
\end{align}
where $\theta = z \partial_z$ and $\scD \omega_0(z)= 0$.
We observe that the operator
generates integral BPS numbers 
for genus $0$ and genus $1$ with small degrees,
by standard methods for the computation
\cite{MR1115626, MR1240687}.
\end{remark}

%]

%]
%[ acknowledgement
\section*{Acknowledgments}
 This article is an extended version of the author's talk at
 Algebraic and Geometric Combinatorics on Lattice Polytopes 2018.
 He would like to thank the organizers for the kind invitation.
 He is also grateful to 
 Tomoyuki Hisamoto, Atsushi Ito and Taro Sano
 for helpful discussions.

%]
%[ bibliography

%]
\end{document}